\documentclass[intlimits]{article}
\usepackage{latexsym,amsfonts,amssymb,amsmath,amsthm,amscd}
\usepackage{chngcntr}

\newtheorem{theorem}{Theorem}[part]

\newtheorem{proposition}[theorem]{Proposition}

\newtheorem{problem}[theorem]{Problem}

\newtheorem{remark}[theorem]{Remark}

\counterwithin{section}{part}

\numberwithin{equation}{part}

\def\p{\partial}

\newcommand\JM{Mierczy\'nski}
\newcommand\RR{\ensuremath{\mathbb{R}}}

\newcommand\NN{\ensuremath{\mathbb{N}}}

\newcommand{\PP}{\ensuremath{\mathbb{P}}}

\newcommand{\OFP}{\ensuremath{(\Omega,\mathcal{F},\mathbf{P})}}

\newcommand{\lambdadom}{\ensuremath{\lambda_{\mathrm{dom}}}}

\newcommand{\abs}[1]{\ensuremath{\lvert#1\rvert}}

\newcommand{\norm}[1]{\ensuremath{\lVert#1\rVert}}

\DeclareMathOperator{\diag}{diag}

\DeclareMathOperator{\trace}{tr}
\DeclareMathOperator{\perm}{perm}

\begin{document}

\title{Estimates for principal Lyapunov exponents: \\
A survey}

\author{Janusz \JM
\\
Institute of Mathematics and Computer Science
\\
Wroc{\l}aw University of Technology
}
\date{}
\maketitle

\abstract{This is a survey of known results on estimating the principal Lyapunov exponent of a time-dependent linear differential equation possessing some monotonicity properties.  Equations considered are mainly strongly cooperative systems of ordinary differential equations and parabolic partial differential equations of second order.  The estimates are given either in terms of the principal (dominant) eigenvalue of some derived time-independent equation or in terms of the parameters of the equation itself.  Extensions to other differential equations are considered.  Possible directions of further research are hinted.}

\part{Introduction}
It is well known that for {\em autonomous\/} linear systems of ordinary differential equations (ODEs)
\begin{equation}
\label{eq:ODE-intro}
\frac{du}{dt} = C u,
\end{equation}
where $C$ is a real $N$ by $N$ matrix, as~well~as for {\em autonomous\/} linear problems for parabolic second order partial differential equations (PDEs)
\begin{equation}
\label{eq:PDE-intro}
\begin{cases}
\displaystyle \frac{\partial u}{\partial t}  = {\Delta} u + c(x) u, &
x \in D
\\[1ex]
u(x) = 0, &  x \in \partial D,
\end{cases}
\end{equation}
where $D \subset \RR^n$ is a bounded domain with sufficiently smooth boundary $\partial D$, $\Delta = \partial^2/\partial_1^2 {}+ \ldots + {} \partial^2/\partial_n^2$ is the Laplacian and $c \colon \overline{D} \to \RR$ is sufficiently regular, asymptotic behavior of solutions to \eqref{eq:ODE-intro}, respectively to~\eqref{eq:PDE-intro}, is fully
determined by the spectrum of the matrix $C$, respectively by the spectrum of the elliptic boundary value problem
\begin{equation}
\label{eq:PDE-intro-elliptic}
\begin{cases}
0 = {\Delta} u + c(x) u, & x \in D
\\
u(x) = 0, &  x \in \partial D.
\end{cases}
\end{equation}
More precisely, the spectrum of the solution operator at $t > 0$ equals,
\begin{itemize}
\item
in the case of~\eqref{eq:ODE-intro}, the exponent of the spectrum of $tC$;
\item
in the case of~\eqref{eq:PDE-intro}, the exponent of $t$ times the spectrum of the problem~\eqref{eq:PDE-intro-elliptic}, plus $0$.
\end{itemize}

Indeed, the above property, called the Spectral Mapping Theorem, holds for a wide class of semigroups of linear operators (see, e.g., \cite{EngNag}).

\medskip
For a system of {\em nonlinear autonomous\/} ordinary differential equations
\begin{equation}
\label{eq:ODE-intro-nonlinear}
\frac{dz}{dt} = f(z),
\end{equation}
where $f = (f_1, \dots, f_N) \colon \RR^N \to \RR^N$ is a $C^1$ vector field such that $f(0) = 0$, the linear part of the solution operator around $0$ equals precisely the solution operator of the linearization of $f$ at $0$, that is, $e^{tC}$, where
\begin{equation*}
C = \biggl[ \frac{\partial f_i}{\partial z_j}(0) \biggr]_{i,j=1}^{N}.
\end{equation*}
Similarly, in the case of {\em nonlinear autonomous\/} parabolic PDE
\begin{equation}
\label{eq:PDE-intro-nonlinear}
\begin{cases}
\displaystyle \frac{\partial z}{\partial t} = {\Delta} z + f(x, z), &
x \in D,
\\[1ex]
z(t,x) = 0, &  x \in \partial D,
\end{cases}
\end{equation}
where $f \colon \overline{D} \times \RR \to \RR$ is a sufficiently smooth function such that $f(x, 0) = 0$, the linear part of the solution operator of~\eqref{eq:PDE-intro-nonlinear} around $0$ is precisely the solution operator of the linearization of \eqref{eq:PDE-intro-nonlinear} at $0$, that is, of the problem~\eqref{eq:ODE-intro} with
\begin{equation*}
c(x) = \frac{\partial f}{\partial z}(x, 0), \quad x \in \overline{D}.
\end{equation*}

\smallskip
Usually the above two steps are combined as the Principle of Linearized Stability, stating that if all eigenvalues of the linearization of~\eqref{eq:ODE-intro-nonlinear} or of~\eqref{eq:PDE-intro-nonlinear} at the trivial solution have negative real parts then the trivial solution is (exponentially) asymptotically stable, and if there is an eigenvalue with positive real part then the trivial solution is unstable.

\medskip
There arises a question what happens in the case when the original nonlinear differential equation is nonautonomous, or even when it is autonomous but its solution whose (in)stability we are investigating is not stationary.  The linear part (derivative) of the solution operator around that reference solution is given by the solution of
the {\em nonautonomous\/} linear equation being the linearization of the original equation at the reference solution.

\smallskip
In order not to go into unnecessary generality, consider the trivial solution of a periodic (with period $T > 0$) system of ODEs
\begin{equation}
\label{eq:ODE-nonlinear}
\frac{dz}{dt} = f(t, z),
\end{equation}
where $f(t + T, z) = f(z)$ for all $t \in \RR$ and $z \in \RR^N$ and $f(t, 0) = 0$ for all $t$.

After linearization we obtain
\begin{equation}
\label{eq:ODE-intro-periodic}
\frac{du}{dt} = C(t) u,
\end{equation}
where
\begin{equation}
\label{eq:ODE-linearization}
C(t) = \Bigl[ \frac{\partial f_i}{\partial z_j}(t,0) \Bigr]_{i,j=1}^{N}.
\end{equation}
Let $U(t; s)$ stand for the transition matrix of~\eqref{eq:ODE-intro-periodic}:  $U(t;s)u_0$ is the value at time $t$ of the solution of the initial value problem
\begin{equation}
\label{eq:IVP-intro-periodic}
\begin{cases}
\displaystyle \frac{du}{dt} = C(t) u,
\\[1ex]
u(s) = u_0.
\end{cases}
\end{equation}

The family $\{ U(t; s) \}$ satisfies the following equalities
\begin{equation*}
U(t; r) = U(t; s) U(s; r) \quad \text{for any } r, s, t \in \RR,
\end{equation*}
and
\begin{equation*}
U(t + T; s + T) = U(t; s) \quad \text{for any } s, t \in \RR.
\end{equation*}
Each $t > 0$ can be written as $kT + \theta$ for some $k \in \NN \cup \{0\}$ and $\theta \in [0, T)$, so we have $U(t;0) = U(\theta;0) U(T;0)^k$.  Consequently, in order to investigate asymptotic behavior of solutions of~\eqref{eq:ODE-intro-periodic} it suffices to investigate the iterates of $U(T; 0)$.  The Principle of Linearized Stability works for time\nobreakdash-\hspace{0pt}periodic systems of ODEs, too: if the spectral radius of $U(T;0)$ is smaller than one then the trivial solution of~\eqref{eq:ODE-nonlinear} is (exponentially) asymptotically stable, and if the spectral radius of $U(T;0)$ is larger than one then the trivial solution of~\eqref{eq:ODE-nonlinear} is unstable.

\smallskip
Now it becomes clear that it would be good to obtain estimates of the spectrum of the matrix $U(T;0)$ in terms of (perhaps the spectra of) its ``generators,'' that is, of the matrices $C(t)$, $t \in [0, T]$.  However, there is a caveat: whereas in the autonomous case for each $t > 0$ there is the relation
\begin{equation*}
U(t; 0) = \exp{( t C)},
\end{equation*}
in the $T$\nobreakdash-\hspace{0pt}periodic case, for $N > 1$ the relation
\begin{equation*}
U(T; 0) = \exp{\biggl( \int\limits_{0}^{T} C(t) \, dt \biggr)}
\end{equation*}
need not hold (cf.\ \cite{Iser}).

In the light of the above one is hardly surprised to learn that numerous examples are known of ODE systems for which all eigenvalues of $C(t)$ have negative real parts but the spectral radius of $U(T;0)$ is greater than one, as well as ODE systems for which some eigenvalues of $C(t)$ have positive real parts but the spectral radius of $U(T;0)$ is smaller than one (see a nice paper~\cite{JR}).

\bigskip
It is the purpose of the present paper to give a (rather incomplete) survey of what is known regarding (upper and lower) estimates of the spectral radius of the solution operator for some special cases of (systems of) linear time\nobreakdash-\hspace{0pt}periodic differential equations.  The unifying feature of all the equations considered here is that their solution operators have some (strong) order\nobreakdash-\hspace{0pt}preserving property.  For such an equation there exists a unique (up to multiplication by positive scalars) positive solution, $v(t)$, whose directions attract, as $t \to \infty$, the directions of any other positive solution.  The logarithmic growth rate of $v(t)$ (called the {\em principal Lyapunov exponent\/}) equals, firstly, the logarithmic growth rate of the solution operator, and, secondly, the logarithmic growth rate of any positive solution.

\medskip
Whereas in many applications to physical sciences or engineering stability (and, consequently, an \textbf{upper} estimate of the spectrum) is looked~for, the situation in biological sciences is frequently different.  Let us look at some such applications.
We restrict ourselves to time\nobreakdash-\hspace{0pt}periodic ODE systems
\begin{equation*}
\frac{dz}{dt} = f(t, z)
\end{equation*}
that are {\em strongly cooperative\/}, that is, $\p f_{i}/\p z_j > 0$ for $i \ne j$.  Moreover, assume that $f(t,0) = 0$ for all $t \in \RR$.

Systems of the above form appear when modeling populations in which individuals can transit between several states (for instance, bacterial populations can switch between dormant and active states, see~\cite{MalSm}), or populations in which individuals can move between discrete patches (for discrete\nobreakdash-\hspace{0pt}time models, see~\cite{Schreib}).  If $z_{i}(t)$ measures the population size in state $i$ (or in patch $i$), strong cooperativity is a natural assumption (whereas the mortality and growth rates in state $i$ are reflected in the derivative $\p f_i/\p z_i$).  In many cases we are interested in the permanence of the population, which mathematically means that the zero solution should be in some sense repelling.  The positivity of the principal Lyapunov exponent of the linearization is a natural sufficient condition for that.  Therefore finding viable \textbf{lower} estimates of the principal Lyapunov exponent is of immense interest.

\medskip
The paper is organized as~follows.  In Part~\ref{part:ODEs} we give a survey of results on strongly cooperative linear systems of ODEs, and in Part~\ref{part:PDEs} we consider linear second\nobreakdash-\hspace{0pt}order PDEs of parabolic type.  We strive to highlight symmetry in both cases.

In Part~\ref{part:generalizations-non-periodic} generalizations are mentioned to almost\nobreakdash-\hspace{0pt}periodic, general nonautonomous, and random systems.

Part~\ref{part:generalization-other-DE} deals with extensions to other systems:  systems of linear ODEs that are only cooperative, strongly cooperative systems of reaction--diffusion equations, general parabolic PDEs of second order, nonlocal dispersal equations, and, finally, cooperative systems of delay differential equations.

Part~\ref{part:discrete-time} is devoted to discrete\nobreakdash-\hspace{0pt}time systems, more precisely to random systems of positive matrices.  Only a couple of results are mentioned, and in the context of their continuous\nobreakdash-\hspace{0pt}time analogs.

\medskip
The majority of theorems presented (or at~least alluded to) in the present paper give bounds on the principal Lyapunov exponent in terms of the dominant (or principal) eigenvalues for some time\nobreakdash-\hspace{0pt}independent system derived from the original one, either by taking minimum or maximum (for~example, Proposition~\ref{prop:ODE-trivial}) or by taking time\nobreakdash-\hspace{0pt}average (for~example, Theorem~\ref{thm:PDE-averaging}).

Some results, such as Theorem~\ref{thm:Kolotilina-Frobenius} or a Faber--Krahn type inequality in Subsection~\ref{subsect:Faber-Krahn}, are different in the sense that they provide estimates of the principal Lyapunov exponent directly in terms of the parameters of the system.

\medskip
Regarding proofs, usually hints or indications are given only, with references to existing papers.  In some instances a fairly complete proof is given, as is the case with Theorem~\ref{thm:averaging-ODEs}, which is a specialization of a much more general (and much more difficult to prove) result.

\bigskip
Throughout the paper, $I$ stands for the identity mapping (operator) or the identity matrix, depending on the context.  For a matrix $C$, $C^{\top}$ denotes its transpose.

\part{Cooperative Systems of ODEs}
\label{part:ODEs}
In Part~\ref{part:ODEs},  $\langle \cdot, \cdot \rangle$ denotes the standard inner product, and $\norm{\cdot}$ denotes the Euclidean norm, in $\RR^N$.

\section{Preliminaries}
\label{sect:preliminaries-ODEs}

\subsection{Preliminaries: ODEs}
\label{subsect:preliminaries-ODEs}
For a system of linear ordinary differential equations
\begin{equation}
\label{eq:ODEs}
\frac{du}{dt} = C(t) u,
\end{equation}
where $C \colon \RR \to \RR^{N \times N}$ is a continuous and
\textbf{time\nobreakdash-\hspace{0pt}periodic} (with period $T$) matrix function, denote by $U(t; s) u_0$ ($s, t \in \RR$, $u_0 \in \RR^N$) the value at time $t$ of the solution of \eqref{eq:ODEs} satisfying the initial condition $U(s; s) u_0 = u_0$.  $U(t;s)$ is called the {\em transition matrix\/} ({\em Cauchy matrix\/}, {\em propagator\/}).

The family $\{ U(t; s) \}_{s, t \in \RR}$ of $N$ by $N$ matrices satisfies the following equalities:
\begin{equation*}
U(t; r) = U(t; s) U(s; r),
\end{equation*}
and
\begin{equation*}
U(t + T; s + T) = U(t; s).
\end{equation*}

We call $U(T; 0)$ the {\em monodromy matrix\/} of~\eqref{eq:ODEs} (in the context of nonlinear systems we usually speak of the {\em Poincar\'e map\/}, or {\em period map}).

\subsection{Preliminaries: Perron Theorem}
\label{subsect:preliminaries-Perron}
We introduce the following notation:
\begin{gather*}
\RR^N_{+} := \{\, u \in \RR^N: u_i \ge 0 \text{ for all } i = 1, 2,
\dots, N \,\},
\\
\RR^N_{++} := \{\, u \in \RR^N: u_i > 0 \text{ for all }i = 1, 2,
\dots, N\,\}.
\end{gather*}
Vectors in $\RR^N_{+}$ (resp.\ in $\RR^N_{++}$) will be called {\em nonnegative\/} (resp.\ {\em positive\/}). Similarly, matrices with all entries nonnegative (resp.\ positive) will be referred to as {\em nonnegative\/} (resp.\ {\em positive\/}) {\em matrices\/}.

We recall the celebrated theorem of Perron (sometimes called the Perron--Frobenius theorem; see, e.g., \cite[Theorem (1.4)]{BP}, or \cite[Theorem 1.1]{Sen}).
\begin{theorem}[Perron(--Frobenius)]
\label{thm:Perron} The spectral radius of a positive $N$ by $N$ matrix $C$ has the following properties:
\begin{enumerate}
\item[{\rm (i)}]
it is positive, and is a simple eigenvalue;
\item[{\rm (ii)}]
an eigenvector pertaining to it can be chosen positive;
\item[{\rm (iii)}]
all the remaining eigenvalues of $C$ have moduli smaller than the spectral radius;
\item[{\rm (iv)}]
eigenvectors pertaining to the remaining eigenvalues do not belong to $\RR^{N}_{+}$.
\end{enumerate}
\end{theorem}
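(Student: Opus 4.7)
My plan is to use the classical three-step Perron argument: establish existence of a positive eigenpair by a fixed-point theorem, identify that eigenvalue with the spectral radius via a pairing against a positive left eigenvector, and finally upgrade weak inequalities to strict ones by exploiting the strict positivity of $C$ in the triangle inequality.

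For the first step, I would apply the Brouwer fixed-point theorem to the continuous self-map $u \mapsto Cu/\sum_i (Cu)_i$ of the simplex $\Sigma = \{u \in \RR^N_{+} : \sum_i u_i = 1\}$. The denominator is strictly positive on $\Sigma$ because $C$ has strictly positive entries, so the map is well-defined and produces a fixed point $v \in \Sigma$ satisfying $Cv = \lambda v$ for some $\lambda > 0$. Since $v = \lambda^{-1} Cv$, in fact $v \in \RR^N_{++}$. The same reasoning applied to $C^{\top}$ yields a strictly positive left eigenvector $\phi \in \RR^N_{++}$ with $\phi^{\top} C = \tilde\lambda\, \phi^{\top}$ for some $\tilde\lambda > 0$.

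For the second step, I would compare $\lambda$ to an arbitrary eigenvalue $\mu$ using $\phi$. For any $w \in \CC^N \setminus \{0\}$ with $Cw = \mu w$, entrywise positivity of $C$ together with the triangle inequality gives $\abs{\mu}\abs{w} \le C\abs{w}$, where $\abs{w} = (\abs{w_1},\dots,\abs{w_N})$. Taking the inner product with $\phi$ yields
\begin{equation*}
\abs{\mu}\, \langle \phi, \abs{w} \rangle \le \langle \phi, C\abs{w}\rangle = \tilde\lambda \langle \phi, \abs{w}\rangle,
\end{equation*}
and $\langle \phi, \abs{w}\rangle > 0$ forces $\abs{\mu} \le \tilde\lambda$. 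Specializing $\mu = \lambda$ and then exchanging the roles of $C$ and $C^{\top}$ gives $\tilde\lambda = \lambda$, so $\lambda$ is the spectral radius, confirming (i) and (iii) in their nonstrict form.

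The third step is to upgrade to strict inequality and to simplicity, and this is where I expect the main difficulty, since one must extract genuine strict inequality from an argument that is natively only monotone. If $\abs{\mu} = \lambda$, equality must hold throughout $\abs{Cw} \le C\abs{w}$; because every entry $C_{ij}$ is strictly positive, this triangle-inequality equality forces the $w_j$ to share a common argument, so $w$ is a unimodular scalar multiple of $\abs{w}$ and $\mu = \lambda$. Geometric simplicity follows because any two linearly independent real positive eigenvectors at $\lambda$ would admit a nontrivial linear combination with a zero entry, contradicting strict positivity of $C(\cdot)$; algebraic simplicity comes from the generalized-eigenvector pairing $\phi^{\top}(\lambda w + v) = \phi^{\top} Cw = \lambda \phi^{\top} w$, which gives $\phi^{\top} v = 0$ and is impossible by positivity of $\phi$ and $v$. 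Finally (iv) is immediate: any nonnegative nonzero eigenvector $w$ satisfies $\mu w = Cw \in \RR^N_{++}$, so $w \in \RR^N_{++}$, and simplicity from (i) forces $\mu = \lambda$.
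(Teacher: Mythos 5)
The paper does not prove Theorem~\ref{thm:Perron} at all: it is quoted as a classical result with references to Berman--Plemmons and Seneta, so there is no internal argument to compare against. Your proposal supplies a genuine self-contained proof along one of the standard textbook routes (Brouwer fixed point on the simplex to produce a positive right and a positive left eigenpair, the pairing $\langle \phi, \cdot \rangle$ to identify the eigenvalue with the spectral radius, and the equality case of the triangle inequality under strictly positive entries to get strictness and simplicity), and the structure is sound: the map $u \mapsto Cu/\sum_i (Cu)_i$ is well defined on the simplex because $Cu \in \RR^N_{++}$ for nonnegative nonzero $u$; the inequality $\abs{\mu}\abs{w} \le C\abs{w}$ paired against $\phi$ gives (iii) in weak form and the identification $\lambda = \tilde\lambda = $ spectral radius; and the Jordan-chain pairing $\phi^{\top}Cw = \lambda\phi^{\top}w$ correctly rules out algebraic multiplicity greater than one.

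Two places deserve to be spelled out, both fixable with the tools you already have. First, in the strictness step, ``equality must hold throughout $\abs{Cw} \le C\abs{w}$'' is not automatic from $\abs{\mu}=\lambda$ alone: you get $\lambda\abs{w} \le C\abs{w}$ componentwise, and you must pair once more with the strictly positive $\phi$ to conclude that no component inequality can be strict (and note that then $C\abs{w}=\lambda\abs{w}$ forces $\abs{w}\in\RR^N_{++}$, so the common-argument conclusion applies to every coordinate). Second, in (iv) the appeal to ``simplicity from (i)'' does not do the work: simplicity of $\lambda$ by itself does not exclude a different eigenvalue having a positive eigenvector. The correct one-line finish is again the pairing: if $w \in \RR^N_{+}\setminus\{0\}$ and $Cw = \mu w$, then $\mu\langle\phi,w\rangle = \langle\phi,Cw\rangle = \lambda\langle\phi,w\rangle$ with $\langle\phi,w\rangle>0$, hence $\mu=\lambda$, so no remaining eigenvalue can have a nonnegative eigenvector. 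With these two clarifications the proof is complete and is a perfectly adequate substitute for the citation the paper relies on.
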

By $\mathcal{M}_{N}$ we denote the set of all $N$ by $N$ matrices with positive off\nobreakdash-\hspace{0pt}diagonal entries.  Since for $C \in \mathcal{M}_{N}$ its exponential $e^{tC}$ is, for all $t > 0$, a positive matrix, a direct application of the Spectral Mapping Theorem for matrices yields the following.
\begin{theorem}
\label{thm:Perron-off-diagonal}
For $C \in \mathcal{M}_{N}$, its eigenvalue with the largest real part \textup{(the dominant eigenvalue)} has the following properties:
\begin{enumerate}
\item[{\rm (i)}]
it is real and simple;
\item[{\rm (ii)}]
an eigenvector pertaining to it can be chosen positive;
\item[{\rm (iii)}]
all the remaining eigenvalues of $C$ have real parts smaller than the dominant eigenvalue;
\item[{\rm (iv)}]
eigenvectors pertaining to the remaining eigenvalues do not belong to $\RR^{N}_{+}$.
\end{enumerate}
\end{theorem}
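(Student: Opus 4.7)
The plan is to reduce this theorem to the Perron theorem for positive matrices (Theorem~\ref{thm:Perron}) applied to the exponential $e^{tC}$, combined with the Spectral Mapping Theorem.

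First I would verify the key preliminary fact stated in the paragraph preceding the theorem: for $C \in \mathcal{M}_{N}$ and any $t > 0$, the matrix $e^{tC}$ is (entrywise) strictly positive. Choose $\alpha > 0$ large enough that $C + \alpha I$ has all entries positive (possible since only the diagonal of $C$ needs to be shifted into the positive range, the off\nobreakdash-\hspace{0pt}diagonal being already positive). Then $e^{t(C + \alpha I)} = \sum_{k \ge 0} \frac{t^{k}}{k!} (C + \alpha I)^{k}$ is a sum of nonnegative matrices, of which the $k = 1$ term $t(C + \alpha I)$ is strictly positive, and $e^{tC} = e^{-t\alpha} e^{t(C + \alpha I)}$ inherits positivity.

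Next I would apply Theorem~\ref{thm:Perron} to $e^{tC}$ for a fixed $t > 0$: its spectral radius $r(t)$ is a positive simple eigenvalue with a positive eigenvector $v$, every other eigenvalue has strictly smaller modulus, and all corresponding eigenvectors lie outside $\RR^{N}_{+}$. By the Spectral Mapping Theorem, $\sigma(e^{tC}) = \{\, e^{t\lambda} : \lambda \in \sigma(C)\,\}$, with algebraic multiplicities and (generalized) eigenspaces transported in the obvious way. Setting $\lambda_{\mathrm{dom}} := \max\{\operatorname{Re}\lambda : \lambda \in \sigma(C)\}$, we obtain $r(t) = e^{t \lambda_{\mathrm{dom}}}$, and the simplicity clause of Perron forces exactly one $\lambda^{\star} \in \sigma(C)$ to satisfy $e^{t \lambda^{\star}} = r(t)$, with algebraic multiplicity one; every other $\lambda \in \sigma(C)$ then satisfies $|e^{t\lambda}| < e^{t\lambda_{\mathrm{dom}}}$, i.e.\ $\operatorname{Re}\lambda < \lambda_{\mathrm{dom}}$, giving clause~(iii) as well as simplicity in~(i).

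To finish (i), I would observe that $e^{t\lambda^{\star}} = r(t) > 0$ is real and positive \emph{for every} $t > 0$, which forces $\operatorname{Im}\lambda^{\star} = 0$ (choose two incommensurable values of $t$, or simply note that $e^{it \operatorname{Im}\lambda^{\star}} = 1$ for all $t > 0$ implies $\operatorname{Im}\lambda^{\star} = 0$). For~(ii) and~(iv), since $\lambda^{\star}$ is simple, its eigenspace in $C$ coincides with the eigenspace of $e^{tC}$ at $r(t)$, which by Perron is spanned by a positive vector; and an eigenvector of $C$ for any $\lambda \ne \lambda^{\star}$ is simultaneously an eigenvector of $e^{tC}$ for the non\nobreakdash-\hspace{0pt}Perron eigenvalue $e^{t\lambda}$, hence by Perron~(iv) cannot lie in $\RR^{N}_{+}$. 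The one delicate point worth stating carefully, rather than a genuine obstacle, is the argument that $\lambda^{\star}$ is real; everything else is essentially transporting Perron's conclusions through the exponential map.
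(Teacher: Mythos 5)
Your proposal is correct and takes essentially the same route as the paper: the paper obtains Theorem~\ref{thm:Perron-off-diagonal} in one line by noting that $e^{tC}$ is a positive matrix for all $t>0$ and invoking Theorem~\ref{thm:Perron} together with the Spectral Mapping Theorem, which is exactly your argument with the details (positivity of $e^{tC}$ via the shift $C+\alpha I$, transport of simplicity, reality of the dominant eigenvalue, and identification of eigenvectors) written out explicitly.
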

We will denote the dominant eigenvalue of $C \in \mathcal{M}_{N}$ by $\lambdadom(C)$.

\section{Principal Lyapunov Exponent for \\ Strongly Cooperative Systems of ODEs}
\label{sect:strongly-cooperative-ODEs}
We call a linear time\nobreakdash-\hspace{0pt}periodic system of ODEs
\begin{equation}
\label{eq:ODE}
\frac{du}{dt} = C(t) u
\end{equation}
{\em strongly cooperative} if for any $t \in \RR$ the matrix $C(t)$ is in $\mathcal{M}_{N}$.

It follows from a theorem for quasimonotone systems of (not~necessarily linear) ODEs due to M\"uller and Kamke (see, e.g., \cite[Chapter II]{W}, or \cite[Chapter 3]{HiSm}) that the monodromy matrix $U(T;0)$ for a strongly cooperative \eqref{eq:ODE} is positive.

A solution $u \colon [0,\infty) \to \RR^N$ of \eqref{eq:ODE} is called {\em positive\/} if $u(t) \in \RR^N_{++}$ for all $t > 0$.  A necessary and sufficient condition for a solution of strongly cooperative~\eqref{eq:ODE} to be positive is that $u(0) \in \RR^{N}_{+} \setminus \{0\}$.

A solution $u \colon \RR \to \RR^N$ is called {\em globally positive\/} if $u(t) \in \RR^N_{++}$ for all $t \in \RR$.

\begin{theorem}
\label{thm:Floquet-ODE}
For a linear $T$\nobreakdash-\hspace{0pt}periodic strongly cooperative system \eqref{eq:ODE} there exist and are uniquely determined: a number $\rho > 0$, and a globally positive solution $v(t)$ of \textup{(\ref{eq:ODE})} such that
\begin{itemize}
\item[{\rm (i)}]
$v(T) = {\rho} v(0)$;
\item[{\rm (ii)}]
$\norm{v(0)} = 1$.
\end{itemize}
Consequently, the {\em logarithmic growth rate\/} of $v(t)$, $\lim\limits_{t \to \infty} \frac{1}{t}\ln\norm{v(t)}$, exists and is equal to $\frac{1}{T}\ln\rho$.  Moreover,
\begin{itemize}
\item[{\rm (iii)}]
\begin{equation*}
\lim\limits_{t \to \infty} \frac{\ln{\norm{U(t;0)}}}{t} = \frac{\ln{\rho}}{T};
\end{equation*}
\item[{\rm (iv)}]
for any positive solution $u(t)$ of \textup{(\ref{eq:ODE})} there holds
\begin{equation*}
\lim\limits_{t \to \infty} \frac{\ln{\norm{u(t)}}}{t} = \frac{\ln{\rho}}{T}.
\end{equation*}
\end{itemize}
\end{theorem}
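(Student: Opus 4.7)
The plan is to reduce everything to the Perron--Frobenius Theorem~\ref{thm:Perron} applied to the monodromy matrix $M := U(T;0)$. As the excerpt notes, strong cooperativity together with M\"uller--Kamke forces $M$ to be a positive matrix, so Theorem~\ref{thm:Perron} provides a unique simple positive eigenvalue $\rho > 0$ of $M$ (equal to its spectral radius), a unique unit positive eigenvector $v_0 \in \RR^N_{++}$ with $\norm{v_0} = 1$, and the fact that every other eigenvalue of $M$ has modulus strictly less than $\rho$. This immediately handles existence/uniqueness of the scalar $\rho$ and of the initial value $v(0) := v_0$.

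Next I would define $v(t) := U(t;0) v_0$ for all $t \in \RR$. Property~(i) is the computation $v(T) = M v_0 = \rho v_0 = \rho v(0)$, and (ii) is just the normalization. For global positivity, note that strong cooperativity gives $U(t;0)\RR^N_{+}\setminus\{0\} \subset \RR^N_{++}$ for every $t > 0$, so $v(t) \in \RR^N_{++}$ for $t \ge 0$; for $t < 0$, pick $k \in \NN$ with $t + kT > 0$ and use the cocycle plus periodicity identity together with (i) iterated to write $v(t) = \rho^{-k} v(t + kT) \in \RR^N_{++}$. The logarithmic growth rate of $v$ is then immediate from $v(kT) = \rho^k v_0$ and the boundedness of $\{\norm{U(\theta;0)v_0}:\theta \in [0,T]\}$ away from $0$ and $\infty$.

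For (iii), I would write $t = kT + \theta$ with $k \in \NN \cup \{0\}$, $\theta \in [0,T)$, and use $U(t;0) = U(\theta;0) M^k$. Gelfand's formula gives $\lim_{k \to \infty} \norm{M^k}^{1/k} = \rho$, and since $\norm{U(\theta;0)}$ stays in a compact subinterval of $(0,\infty)$, taking logarithms and dividing by $t = kT + \theta$ yields the claimed limit $(\ln\rho)/T$. For (iv), let $u(t)$ be any positive solution and set $u_0 := u(0) \in \RR^N_{+}\setminus\{0\}$. The key observation is that the left Perron eigenvector $w_0$ of $M$, i.e.\ the Perron eigenvector of $M^{\top}$, is also strictly positive by Theorem~\ref{thm:Perron}, so $\langle u_0, w_0 \rangle > 0$. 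Decomposing $u_0 = \alpha v_0 + r$ along the $M$-invariant splitting $\RR^N = \spanned(v_0) \oplus \ker(\langle \cdot, w_0\rangle)$ one gets $\alpha = \langle u_0, w_0\rangle/\langle v_0, w_0\rangle > 0$, and since the spectral radius of $M$ restricted to $\ker(\langle \cdot, w_0\rangle)$ is some $\rho' < \rho$, we have $M^k u_0 = \alpha \rho^k v_0 + O((\rho')^k)$. Hence $u(kT) \sim \alpha \rho^k v_0$, and the same sandwich argument as in (iii) converts this into the limit $(\ln\rho)/T$.

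The only real subtlety I expect is in~(iv): one must argue that the component of $u_0$ along $v_0$ is nonzero (indeed positive), which is what forces the positive solution $u$ to actually pick up the dominant rate and rules out accidental cancellation; this is where the \emph{positivity of the left eigenvector} (rather than merely of the right eigenvector) is essential. Everything else is a routine combination of Perron--Frobenius, Gelfand's formula, and the cocycle/periodicity identities for $U(t;s)$ recalled in Section~\ref{subsect:preliminaries-ODEs}.
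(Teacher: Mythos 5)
Your argument is correct and follows essentially the same route as the paper: apply the Perron(--Frobenius) theorem to the positive monodromy matrix $U(T;0)$, take $\rho$ to be its spectral radius and $v(t)$ the solution through the normalized positive eigenvector, and deduce (iii) and (iv) from the spectral gap (your use of the positive \emph{left} eigenvector to show a nonnegative nonzero initial value has a positive component along $v_0$ is exactly the ``more effort'' the paper alludes to for (iv)). One small point of precision: in the sandwich for (iii) the lower bound needs $\sup_{\theta\in[0,T]}\norm{U(\theta;0)^{-1}}<\infty$ (i.e.\ $\norm{M^k}\le\norm{U(\theta;0)^{-1}}\,\norm{U(t;0)}$), not merely that $\norm{U(\theta;0)}$ is bounded away from zero.
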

In the above theorem as $v(t)$ we take the solution of~\eqref{eq:ODE} passing at time zero through the normalized positive eigenvector $v$ of the monodromy matrix $U(T;0)$, and $\rho > 0$ is just the spectral radius of $U(T;0)$.  Part (iii) is a quite straightforward consequence of Theorem~\ref{thm:Perron}.  Part (iv) as~well~as the uniqueness (up~to multiplication by positive scalars) of the globally positive solution $v(t)$ also follow from Theorem~\ref{thm:Perron}, but their proof requires more effort.

The number
\begin{equation*}
\Lambda := \frac{\ln{\rho}}{T},
\end{equation*}
appearing in the statement of Theorem~\ref{thm:Floquet-ODE} is called the {\em principal Lyapunov exponent\/} (sometimes {\em principal Floquet exponent\/}) of \eqref{eq:ODE}.

\medskip
A reader acquainted with the Floquet theory of periodic linear ODEs (see, e.g., \cite[Section 2.4]{Chic}, or~\cite[Section 2.2]{Far}) will notice that $\rho$ is the characteristic multiplier of~\eqref{eq:ODE} with the largest modulus and $\Lambda$ is the Floquet exponent with the largest real part (it is true that the imaginary parts of Floquet exponents are determined up~to addition of an integer multiple of $2\pi$, but in our case we can put the imaginary part to be equal to zero).

\bigskip
We gather now a couple of formulas that will be useful in obtaining estimates of the principal Lyapunov exponent.

A simple application of the Chain Rule gives that
\begin{equation}
\label{eq:ODE-lambda}
\Lambda = \frac{1}{T} \int\limits_{0}^{T} \frac{\langle C(t) v(t),
v(t) \rangle}{\norm{v(t)}^2} \, dt.
\end{equation}

\smallskip
It is easy to see that the function $w \colon \RR \to \RR^{N}$, where $w(t) := v(t) / \norm{v(t)}$, is the solution of the system
\begin{equation*}
\frac{du}{dt} = (C(t) - \kappa(t) I) u
\end{equation*}
satisfying $w(0) = v$, where $\kappa(\cdot)$ is the integrand in~\eqref{eq:ODE-lambda}.

\section{Principal Lyapunov Exponents for ODE Systems: ``Trivial'' Estimate}
\label{section:ODE-trivial}
For a linear $T$-periodic strongly cooperative system \eqref{eq:ODE}, denote by $C_{\mathrm{min}}$ (resp.\ $C_{\mathrm{max}}$) the matrix whose $(i,j)$\nobreakdash-\hspace{0pt}entry equals the minimum (resp.\ maximum) of $c_{ij}(t)$ over $t \in [0, T]$.
\begin{proposition}
\label{prop:ODE-trivial}
For a linear $T$\nobreakdash-\hspace{0pt}periodic strongly cooperative system \eqref{eq:ODE} there holds
\begin{equation*}
\lambda_{\mathrm{dom}}(C_{\mathrm{min}}) \le \Lambda \le \lambda_{\mathrm{dom}}(C_{\mathrm{max}}).
\end{equation*}
\end{proposition}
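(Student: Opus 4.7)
The plan is to sandwich the monodromy matrix $U(T;0)$ entrywise between the constant-coefficient monodromy matrices $e^{TC_{\mathrm{min}}}$ and $e^{TC_{\mathrm{max}}}$, then convert that sandwich into one for the spectral radii via the Perron theorem, and finally pass to logarithms. Because the excerpt has already recorded M\"uller--Kamke positivity, the Perron theorem, and the Spectral Mapping Theorem, each step is conceptually short.

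First I would establish the entrywise comparison
\[
e^{tC_{\mathrm{min}}} \le U(t;0) \le e^{tC_{\mathrm{max}}}, \qquad t \ge 0.
\]
Fix $u_0 \in \RR^N_{+}$ and set $\phi(t) := U(t;0) u_0$ and $\psi(t) := e^{tC_{\mathrm{max}}} u_0$. The difference $w := \psi - \phi$ satisfies $w(0) = 0$ and
\[
\frac{dw}{dt} = C_{\mathrm{max}}\, w + (C_{\mathrm{max}} - C(t))\,\phi(t).
\]
By M\"uller--Kamke, applied to the strongly cooperative system \eqref{eq:ODE}, one has $\phi(t) \in \RR^N_{+}$ for all $t \ge 0$, and since $C_{\mathrm{max}} - C(t)$ has nonnegative entries, the forcing term is componentwise nonnegative. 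The variation-of-constants formula
\[
w(t) = \int_0^t e^{(t-s)C_{\mathrm{max}}}\,(C_{\mathrm{max}} - C(s))\,\phi(s)\,ds,
\]
together with the entrywise positivity of $e^{\tau C_{\mathrm{max}}}$ for $\tau \ge 0$, gives $w(t) \in \RR^N_{+}$. Since $u_0 \in \RR^N_{+}$ was arbitrary, this yields the upper bound; the lower bound follows by symmetric reasoning.

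Second, I would promote the entrywise sandwich to a sandwich of spectral radii. For positive matrices $A \le B$, picking a Perron eigenvector $v \in \RR^N_{++}$ of $B$ with $Bv = \rho(B) v$ gives $A^k v \le B^k v = \rho(B)^k v$ for every $k$, whence $\rho(A) = \lim_k \lVert A^k \rVert^{1/k} \le \rho(B)$. Setting $t = T$ in the sandwich and combining with the Spectral Mapping Theorem, which yields $\rho(e^{TC_{\mathrm{min}}}) = e^{T\lambdadom(C_{\mathrm{min}})}$ and $\rho(e^{TC_{\mathrm{max}}}) = e^{T\lambdadom(C_{\mathrm{max}})}$, produces
\[
e^{T\lambdadom(C_{\mathrm{min}})} \le \rho \le e^{T\lambdadom(C_{\mathrm{max}})}.
\]
Dividing the logarithms by $T$ and recalling $\Lambda = (\ln\rho)/T$ from Theorem~\ref{thm:Floquet-ODE} closes the argument.

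The only genuinely nontrivial step is the comparison principle of the first paragraph; because $C_{\mathrm{min}}, C_{\mathrm{max}} \in \mathcal{M}_N$ and their exponentials are entrywise positive, the variation-of-constants argument is clean and needs no further regularization. Everything else is a mechanical application of results already cited in the excerpt. One might alternatively try to bound the integrand in formula~\eqref{eq:ODE-lambda} by the dominant eigenvalues of $C_{\mathrm{min}}$ and $C_{\mathrm{max}}$, but since these matrices are generally non-self-adjoint their dominant eigenvalues are not Rayleigh-quotient extrema, so the comparison route above is preferable.
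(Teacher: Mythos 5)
Your proof is correct, and it reaches the conclusion by a route that differs from the paper's in its second half. The paper applies the quasimonotone comparison theorems (Walter) directly to the distinguished globally positive solution $v(t)$, getting $\exp(tC_{\mathrm{min}})v(0) \le v(t) \le \exp(tC_{\mathrm{max}})v(0)$, and then finishes by comparing logarithmic growth rates, invoking Theorem~\ref{thm:Floquet-ODE}(iv) for the two autonomous comparison systems. You instead prove, by the variation-of-constants computation, the entrywise sandwich $e^{tC_{\mathrm{min}}} \le U(t;0) \le e^{tC_{\mathrm{max}}}$ of the transition matrices themselves, and then pass to spectral radii at $t = T$ via monotonicity of the spectral radius under entrywise domination (using a Perron eigenvector of the dominating matrix), the Spectral Mapping Theorem for $C_{\mathrm{min}}, C_{\mathrm{max}} \in \mathcal{M}_N$, and $\Lambda = (\ln\rho)/T$. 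Your version is more self-contained: the variation-of-constants argument replaces the citation of comparison theorems for quasimonotone differential inequalities, and the spectral-radius monotonicity lemma replaces the appeal to Theorem~\ref{thm:Floquet-ODE}(iv) for the comparison systems; the cost is a somewhat longer argument, whereas the paper's proof is shorter because it leans on machinery already set up (the solution $v(t)$ and the identification of growth rates of positive solutions with dominant eigenvalues). One detail you should make explicit: in concluding $\rho(A) = \lim_k \norm{A^k}^{1/k} \le \rho(B)$ from $A^k v \le \rho(B)^k v$, note that strict positivity of $v$ gives the entrywise bound $(A^k)_{ij} \le \rho(B)^k \, v_i / v_j$, which is what actually controls $\norm{A^k}$.
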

\begin{proof}[Indication of proof]
For each $t \in \RR$ there holds
\begin{equation*}
C_{\mathrm{min}} v(t) \le v'(t) \le C_{\mathrm{max}} v(t),
\end{equation*}
where the inequalities are understood componentwise.  It follows by standard comparison theorems for quasimonotone systems of ordinary differential inequalities (see, e.g., \cite[Chapter II]{W}) that
\begin{equation*}
\exp(t C_{\mathrm{min}}) v(0) \le v(t) \le \exp(t C_{\mathrm{max}}) v(0), \quad t > 0.
\end{equation*}
Since the logarithmic growth rate of a positive solution of $dv/dt = C_{\mathrm{min}} v$ equals $\lambda_{\mathrm{dom}}(C_{\mathrm{min}})$ (see~Theorem~\ref{thm:Floquet-ODE}(iv)) and an analogous property holds for $dv/dt = C_{\mathrm{max}} v$, the statement follows.
\end{proof}

\section{Principal Lyapunov Exponents for ODE Systems: Estimates from Above}
\label{section:ODE-upper-estimates}
For an $N \times N$ real matrix $C$ denote by $C^{\mathrm{S}}$ its {\em symmetrization\/}:
\begin{equation*}
C^{\mathrm{S}} := \frac{C + C^{\top}}{2}.
\end{equation*}

If $C$ is in $\mathcal{M}_{N}$ then $C^{\mathrm{S}}$ is a symmetric matrix in $\mathcal{M}_{N}$.  For any $u \in \RR^{N}$ there holds
\begin{equation}
\label{eq:ODE-symmetrisation}
\langle C u, u \rangle = \langle \tfrac{1}{2} C u, u \rangle +
\langle u, \tfrac{1}{2} C u \rangle = \langle C^{\mathrm{S}} u, u
\rangle \le \lambdadom(C^{\mathrm{S}}) \: \norm{u}^2.
\end{equation}
By substituting $v(t)$ for $u$ in \eqref{eq:ODE-symmetrisation} we obtain from \eqref{eq:ODE-lambda} the following.
\begin{proposition}
\label{prop:ODEs-above}
For a linear $T$\nobreakdash-\hspace{0pt}periodic strongly cooperative system \eqref{eq:ODE} there holds
\begin{equation*}
\Lambda \le \frac{1}{T} \int\limits_{0}^{T}
\lambdadom(C^{\mathrm{S}}(t)) \, dt.
\end{equation*}
\end{proposition}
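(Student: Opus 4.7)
The plan is to combine the two ingredients that the excerpt has just assembled: the Chain Rule formula \eqref{eq:ODE-lambda} expressing $\Lambda$ as the time-average of the Rayleigh quotient of $C(t)$ along the globally positive Floquet solution $v(t)$, and the pointwise symmetrisation inequality \eqref{eq:ODE-symmetrisation}, which bounds $\langle C(t)u, u\rangle$ by $\lambdadom(C^{\mathrm{S}}(t))\,\norm{u}^2$ for every $u\in\RR^N$.

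First, by \eqref{eq:ODE-lambda},
\[
\Lambda = \frac{1}{T}\int_{0}^{T} \frac{\langle C(t) v(t), v(t)\rangle}{\norm{v(t)}^{2}}\,dt,
\]
where $v(t)$ is the globally positive solution supplied by Theorem~\ref{thm:Floquet-ODE}. Next, for each fixed $t\in[0,T]$ I would substitute $u = v(t)$ into \eqref{eq:ODE-symmetrisation} to obtain the pointwise estimate
\[
\frac{\langle C(t) v(t), v(t)\rangle}{\norm{v(t)}^{2}} \le \lambdadom(C^{\mathrm{S}}(t)).
\]
Integrating this inequality over $[0,T]$ and dividing by $T$ yields precisely the conclusion of the proposition.

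The only point that warrants a moment's care is the justification of the Rayleigh-type bound built into \eqref{eq:ODE-symmetrisation}. Since $C(t)\in\mathcal{M}_{N}$, its symmetrisation $C^{\mathrm{S}}(t)$ is a symmetric matrix in $\mathcal{M}_{N}$, so Theorem~\ref{thm:Perron-off-diagonal} applies and $\lambdadom(C^{\mathrm{S}}(t))$ is real. Because $C^{\mathrm{S}}(t)$ is symmetric, its entire spectrum is real, so $\lambdadom(C^{\mathrm{S}}(t))$ coincides with the largest eigenvalue in the ordinary sense; the desired estimate is then just the Courant--Fischer characterisation of the top eigenvalue of a symmetric matrix.

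I do not anticipate any genuine obstacle: once \eqref{eq:ODE-lambda} and \eqref{eq:ODE-symmetrisation} are in hand the argument is essentially bookkeeping, and the subtlest point is the reconciliation of the two notions of ``dominant eigenvalue'' for the symmetric matrix $C^{\mathrm{S}}(t)$, which is what makes the Rayleigh quotient bound legitimate. One should also note measurability/continuity in $t$ of $\lambdadom(C^{\mathrm{S}}(t))$ so that the integral on the right is well defined, but this is immediate from the continuity of simple eigenvalues in the matrix entries together with the continuity of $C(\cdot)$.
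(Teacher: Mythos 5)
Your proposal is correct and follows exactly the paper's argument: substitute $u = v(t)$ into \eqref{eq:ODE-symmetrisation} and integrate via \eqref{eq:ODE-lambda}. The additional remarks on the Courant--Fischer characterisation and on continuity of $\lambdadom(C^{\mathrm{S}}(t))$ in $t$ are sensible bookkeeping but do not change the route.
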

We should mention here that the inequalities on which the proof of Proposition~\ref{prop:ODEs-above} is based appeared first (for general $C(t)$) in~\cite{Wint}.

We end by observing that by an inequality due to Bendixson (see~\cite[III.1.2.1]{MaMi}), for  $C \in \mathcal{M}_{N}$ one has $\lambdadom(C) \le \lambdadom(C^{\mathrm{S}})$, hence we would rather not hope, in~general, to obtain upper estimates of $\Lambda$ in~terms of the dominant eigenvalues of $C(t)$.

\section{Principal Lyapunov Exponents for ODE Systems: Estimates from Below}
\label{section:ODE-lower-estimates}
In the present section we give a survey of known results on \textbf{lower} estimates of the principal Lyapunov exponent for strongly cooperative systems of linear ODEs.

\subsection{Estimates in Terms of the Entries of a Matrix}
\label{subsection:ODE-Lyapunov-function}
For \textbf{time\nobreakdash-\hspace{0pt}independent} matrices with positive (or, more
general, nonnegative) off\nobreakdash-\hspace{0pt}diagonal entries there are plenty of results giving lower estimates of their dominant eigenvalues in terms of elementary functions of the entries of the matrix (see, e.g., \cite[pp.~152--158]{MaMi}). It is unclear to the author, however, how the existing proofs of those estimates could carry over to strongly cooperative nonautonomous linear systems of ODEs.

\medskip
Regarding the latter, to the author's knowledge the only result known so far is contained in his paper~\cite{Mi} (but see a discrete\nobreakdash-\hspace{0pt}time analog in~\cite{ProtJung}).  The idea is to find a homogeneous polynomial $V$ of the coordinates $u_1, \dots,u_N$ of $u$, taking positive values on $\RR^{N}_{++}$, with the
property that
\begin{itemize}
\item
there exists $\alpha > 0$ such that
\begin{equation*}
\norm{u} \ge \alpha V(u) \quad \text{for all } u \in \RR^{N}_{++};
\end{equation*}
\item
there is a continuous function $\beta \colon \mathcal{M}_N \to \RR$ such that for any $C \in \mathcal{M}_N$ the inequality
\begin{equation*}
\langle \nabla V(u), Cu \rangle \ge \beta(C) \, V(u)
\end{equation*}
holds for all $u \in \RR^{N}_{++}$.
\end{itemize}

\begin{theorem}
\label{thm:Kolotilina-Frobenius}
For a linear $T$\nobreakdash-\hspace{0pt}periodic strongly cooperative system \eqref{eq:ODE} there holds
\begin{enumerate}
\item[{\rm (i)}]
\begin{equation*}
\Lambda \ge \frac{1}{NT} \int\limits_{0}^{T} \Bigl( \trace{C(t)} + 2 \sum\limits_{j < k} \sqrt{c_{jk}(t) c_{kj}(t)} \,\Bigr) \, dt.
\end{equation*}
\item[{\rm (ii)}]
\begin{equation*}
\Lambda \ge \frac{1}{T} \int\limits_{0}^{T}
\Bigl( \min\limits_{i} \sum\limits_{j} c_{ij}(t) \Bigr) \, dt.
\end{equation*}
\item[{\rm (iii)}]
\begin{equation*}
\Lambda \ge \frac{1}{T} \int\limits_{0}^{T}
\Bigl( \min\limits_{j} \sum\limits_{i} c_{ij}(t) \Bigr) \, dt.
\end{equation*}
\end{enumerate}
\end{theorem}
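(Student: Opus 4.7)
The plan is to instantiate the strategy hinted at just before the theorem: in each of the three cases exhibit a positively homogeneous function $V\colon\RR^{N}_{++}\to(0,\infty)$ of degree one such that (a)~$V(u)\le \alpha^{-1}\norm{u}$ for some $\alpha>0$ and all $u\in\RR^N_{++}$, and (b)~$\langle\nabla V(u),Cu\rangle \ge \beta(C)\,V(u)$ for every $C\in\mathcal{M}_N$ and every $u\in\RR^N_{++}$, where $\beta(C)$ is the respective integrand in (i)--(iii). The master argument is then uniform: applying $V$ along the globally positive solution $v(t)$ from Theorem~\ref{thm:Floquet-ODE} and using the chain rule together with (b) gives
\[
\frac{d}{dt}V(v(t)) \;=\; \langle\nabla V(v(t)),\, C(t)v(t)\rangle \;\ge\; \beta(C(t))\,V(v(t)),
\]
so Gronwall yields $V(v(t))\ge V(v(0))\exp\int_0^t\beta(C(s))\,ds$. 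Combining with (a), taking $\tfrac{1}{t}\ln$ and letting $t\to\infty$ along $t=kT$, the left-hand side produces $\Lambda$ by Theorem~\ref{thm:Floquet-ODE}(iv), while $T$-periodicity of $C(\cdot)$ turns the right-hand side into the desired time average $\tfrac{1}{T}\int_0^T\beta(C(s))\,ds$.

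It therefore suffices to exhibit $(V,\beta)$ in each case. For (iii) I would take the linear functional $V(u)=u_1+\cdots+u_N$: then $\nabla V\equiv(1,\dots,1)^{\top}$ and $\langle\nabla V,Cu\rangle = \sum_j\bigl(\sum_i c_{ij}\bigr)u_j \ge \bigl(\min_j\sum_i c_{ij}\bigr) V(u)$, while (a) is Cauchy--Schwarz. For (i) I would take the geometric mean $V(u)=(u_1\cdots u_N)^{1/N}$; since $\partial V/\partial u_i=V(u)/(Nu_i)$,
\[
\frac{\langle\nabla V(u),Cu\rangle}{V(u)} \;=\; \frac{1}{N}\sum_{i,j}c_{ij}\frac{u_j}{u_i} \;=\; \frac{1}{N}\Bigl(\trace C + \sum_{i<j}\Bigl(c_{ij}\tfrac{u_j}{u_i}+c_{ji}\tfrac{u_i}{u_j}\Bigr)\Bigr),
\]
and two-term AM--GM applied to each pair gives $\beta(C) = N^{-1}\bigl(\trace C + 2\sum_{i<j}\sqrt{c_{ij}c_{ji}}\bigr)$; the comparability $V(u)\le\norm{u}/\sqrt{N}$ follows from AM--GM together with $\sum_i u_i\le\sqrt{N}\,\norm{u}$. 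For (ii) the natural candidate is $V(u)=\min_i u_i$: at any $t$ with $v_{i^\ast}(t)=V(v(t))$, the inequalities $v_j(t)\ge v_{i^\ast}(t)$ and $c_{i^\ast j}(t)>0$ for $j\ne i^\ast$ yield
\[
\sum_j c_{i^\ast j}(t)\,v_j(t) \;\ge\; v_{i^\ast}(t)\sum_j c_{i^\ast j}(t) \;\ge\; \Bigl(\min_i\sum_j c_{ij}(t)\Bigr) V(v(t)).
\]

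The main obstacle is that in case (ii) the function $V=\min_i u_i$ is merely Lipschitz, so the chain rule is not directly available. I would handle this by working with the Dini upper-right derivative of the locally Lipschitz scalar function $V\circ v$: at every $t$, choosing any $i^\ast$ realizing the minimum of $v(t)$, the estimate above shows $D^{+}(V\circ v)(t)\ge\beta(C(t))V(v(t))$, and the standard Gronwall estimate for absolutely continuous functions (applied to $\ln V(v(t))$, which stays AC since $V(v(t))$ is bounded away from zero on compact intervals) delivers the exponential lower bound. An alternative route is to approximate $\min_i u_i$ by the smooth, homogeneous $V_p(u)=(\sum_i u_i^{-p})^{-1/p}$ and pass to the limit $p\to\infty$, but the Dini-derivative approach is more direct. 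The computations for (i) and (iii) reduce to classical AM--GM and Cauchy--Schwarz and present no further difficulty.
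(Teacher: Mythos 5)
Your argument is correct, and for two of the three items it is essentially the paper's own method: the geometric mean $(u_1\cdots u_N)^{1/N}$ is just the degree-one normalization of the polynomial $V(u)=u_1\cdots u_N$ that the paper uses for (i) (the $1/N$ in the statement is exactly the degree being divided out), and the linear functional $u_1+\cdots+u_N$ is the paper's second functional; note that, as your computation shows, the sum directly yields the \emph{column}-sum bound (item (iii)), whereas the paper's indication attaches it to (ii) and obtains the other item by passing to the adjoint (time-reversed, transposed) system --- a harmless relabeling. Where you genuinely depart from the paper is the remaining row-sum estimate: instead of the adjoint trick, you use the nonsmooth functional $V(u)=\min_i u_i$ and integrate the resulting differential inequality via Dini derivatives (or smooth approximants $(\sum_i u_i^{-p})^{-1/p}$). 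This is sound --- the min of finitely many $C^1$ functions is locally Lipschitz, at points of differentiability its derivative equals $v_{i^\ast}'(t)$ for any minimizing index, and your estimate $\sum_j c_{i^\ast j}v_j \ge \bigl(\sum_j c_{i^\ast j}\bigr)v_{i^\ast}$ uses only positivity of the off-diagonal entries, so the absolutely continuous function $\ln\min_i v_i(t)$ obeys the required integral inequality. What each route buys: your min-functional argument is self-contained and avoids having to justify that the adjoint system has the same principal Lyapunov exponent (a fact the paper leaves implicit), at the cost of the nonsmooth-Gronwall technicalities you correctly identify and handle; the paper's adjoint reduction stays entirely within the smooth homogeneous-polynomial framework and makes (iii) a one-line corollary of (ii) once that invariance is granted. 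Your master argument (Gronwall along the globally positive solution $v(t)$, the comparability $V(u)\le\alpha^{-1}\norm{u}$, and periodicity along $t=kT$) matches the intended proof and is carried out correctly.
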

\begin{proof}[Indication of proof]
(i) follows by applying the function $V(u) = u_1 \cdot \ldots \cdot u_N$, (ii) follows by applying the function $V(u) = u_1 + \ldots + u_N$, whereas (iii) is just a modification of (ii) to the adjoint system.
\end{proof}
A time\nobreakdash-\hspace{0pt}independent version of (i) (with a different proof, and for nonnegative matrices) appeared as Corollary~3 in~\cite{Kol}, and time\nobreakdash-\hspace{0pt}independent versions of (ii) and (ii) are classical Frobenius estimates (\cite[III.3.1.1]{MaMi}).

\begin{remark}
{\em Theorem~\ref{thm:Kolotilina-Frobenius} cannot be reduced to applying those known estimates for matrices.  Indeed, assume for~simplicity that $C(t)$ is, for all $t \in \RR$, a symmetric matrix.  Applying~\cite[Corollary~3]{Kol} to $C(t)$ for each individual $t \in \RR$ we have
\begin{equation*}
\lambda_{\mathrm{princ}}(C(t)) \ge \frac{1}{N} \Bigl( \trace{C(t)} + 2 \sum\limits_{j < k} c_{jk}(t) \Bigr),
\end{equation*}
which is in no conceivable relation with
\begin{equation*}
\Lambda \ge \frac{1}{NT} \int\limits_{0}^{T} \Bigl( \trace{C(t)} + 2 \sum\limits_{j < k} c_{jk}(t) \,\Bigr) \, dt.
\end{equation*}}
\end{remark}

\begin{problem}
{\em \textbf{For each estimate of the dominant eigenvalue of a matrix in $\mathcal{M}_N$ given in terms of its entries find a proof by means of a suitably chosen function $V$} (cf.\ also Problem~\ref{pr:2}).}
\end{problem}

\subsection{Estimates via Averaging}
\label{subsection:ODE-averaging}
In the present subsection we formulate a result giving an estimate of the principal eigenvalue in terms of the dominant eigenvalue of the time\nobreakdash-\hspace{0pt}averaged equation.  As its proof employs only elementary means, we give it here. It is a specialization of a (much more general) result contained in~\cite{MiSh}.

\smallskip
Throughout the present subsection we assume that $C \colon \RR \to \mathcal{M}_{N}$ is continuous and $T$\nobreakdash-\hspace{0pt}periodic and, moreover, the
off\nobreakdash-\hspace{0pt}diagonal entries are independent of time.  We denote by $\overline{C} = [\overline{c}_{ij}]$ the time average of $C$:
\begin{equation*}
\overline{c}_{ii} := \frac{1}{T} \int\limits_{0}^{T} c_{ii}(t) \, dt,
\qquad \overline{c}_{ij} := c_{ij} \text{ for } i \ne j.
\end{equation*}

\begin{theorem}
\label{thm:averaging-ODEs}
For a linear $T$\nobreakdash-\hspace{0pt}periodic strongly cooperative system~\textup(\ref{eq:ODE}) with off\nobreakdash-\hspace{0pt}diagonal entries independent of time we have
\begin{equation*}
\Lambda \ge \lambdadom(\overline{C}).
\end{equation*}
\end{theorem}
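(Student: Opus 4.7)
The plan is to work with the Floquet positive solution $v(t)$ supplied by Theorem~\ref{thm:Floquet-ODE} and reduce the averaging inequality to a single application of Jensen's inequality, exploiting the fact that only the diagonal entries of $C(t)$ vary with time. Write $g_i(t) := \ln v_i(t)$, which is well defined because $v(t)$ is globally positive, and differentiate to obtain
\begin{equation*}
g_i'(t) \;=\; \frac{v_i'(t)}{v_i(t)} \;=\; c_{ii}(t) \;+\; \sum_{j \ne i} c_{ij}\, e^{g_j(t) - g_i(t)},
\end{equation*}
where the crucial point is that $c_{ij}$ (for $j \ne i$) comes outside the exponential because it does not depend on $t$.

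Next I would integrate this identity over $[0,T]$. Since $v(T) = \rho v(0)$, one has $g_i(T) - g_i(0) = \ln\rho = T\Lambda$ for every index $i$, so
\begin{equation*}
T\Lambda \;=\; T\,\overline{c}_{ii} \;+\; \sum_{j \ne i} c_{ij} \int_{0}^{T} e^{g_j(t) - g_i(t)}\, dt.
\end{equation*}
Applying Jensen's inequality to the convex function $e^{\cdot}$ (and writing $\overline{g}_i := \frac{1}{T}\int_0^T g_i(t)\,dt$) gives
\begin{equation*}
\frac{1}{T}\int_{0}^{T} e^{g_j(t) - g_i(t)}\, dt \;\ge\; \exp(\overline{g}_j - \overline{g}_i).
\end{equation*}
Setting $x_i := e^{\overline{g}_i} > 0$, the two displays combine into the pointwise inequality
\begin{equation*}
\Lambda\, x_i \;\ge\; \overline{c}_{ii}\, x_i \;+\; \sum_{j \ne i} c_{ij}\, x_j \;=\; (\overline{C}\, x)_i \qquad (i = 1,\dots,N).
\end{equation*}

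To finish, I would pair this vector inequality with a positive left eigenvector of $\overline{C}$. Since $\overline{C}^{\top} \in \mathcal{M}_N$, Theorem~\ref{thm:Perron-off-diagonal} provides a positive $y \in \RR^{N}_{++}$ with $\overline{C}^{\top} y = \lambdadom(\overline{C})\, y$. Taking the inner product with $y$ of both sides of $\Lambda x \ge \overline{C} x$ yields
\begin{equation*}
\Lambda \langle y, x \rangle \;\ge\; \langle y, \overline{C} x\rangle \;=\; \langle \overline{C}^{\top} y, x\rangle \;=\; \lambdadom(\overline{C}) \langle y, x \rangle,
\end{equation*}
and dividing by $\langle y, x\rangle > 0$ gives the desired inequality $\Lambda \ge \lambdadom(\overline{C})$.

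The substantive step is the Jensen estimate: it is exactly the hypothesis that the off\nobreakdash-\hspace{0pt}diagonal coefficients are constant that lets one pull $c_{ij}$ outside the integral and apply convexity cleanly, after which everything else is bookkeeping with the Perron--Frobenius theorem. The main ``obstacle'' is therefore conceptual rather than technical: recognizing that the proof must be organized around the logarithmic variables $g_i$ and that the periodicity $v(T) = \rho v(0)$ is what synchronizes the telescoping term on the left-hand side across all coordinates. Without the time-independence of the off\nobreakdash-\hspace{0pt}diagonals, the same scheme breaks down (one would have to estimate $\int c_{ij}(t) e^{g_j(t) - g_i(t)}\, dt$ instead), which is why the general case requires the more elaborate machinery of~\cite{MiSh}.
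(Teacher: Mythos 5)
Your proof is correct and follows essentially the same route as the paper's: in both, the substance is Jensen's inequality applied to the logarithms of the components of the positive Floquet solution (possible precisely because the off\nobreakdash-\hspace{0pt}diagonal $c_{ij}$ are constant), yielding a componentwise inequality $\overline{C}x \le \Lambda x$ for a positive vector $x$ of geometric time\nobreakdash-\hspace{0pt}averages, which is then resolved by Perron--Frobenius theory. The only differences are bookkeeping: you work with $\ln v_i$ directly and use $v(T) = \rho\, v(0)$ where the paper uses the normalized $T$\nobreakdash-\hspace{0pt}periodic solution $w$ together with formula~\eqref{eq:ODE-lambda}, and you finish with an explicit positive left eigenvector of $\overline{C}$ (via Theorem~\ref{thm:Perron-off-diagonal} applied to $\overline{C}^{\top}$) where the paper cites \cite[Theorem (1.11) on p.~28]{BP}.
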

\begin{proof}
Recall that $w(\cdot) = (w_{1}(\cdot), \dots, w_{N}(\cdot))$ denotes
the ($T$\nobreakdash-\hspace{0pt}periodic) solution of the system
\begin{equation}
\label{eq:system-normalized}
\frac{du}{dt} = C(t) u - \kappa(t) u
\end{equation}
satisfying the initial condition $w(0) = v$, where $\kappa(t) = \langle C(t) v(t), v(t) \rangle / \norm{v(t)}^2$.

For any $i \ne j$ application of Jensen's inequality to the function
$w_{j}(\cdot)/w_{i}(\cdot)$ on the interval $[0, T]$ gives that
\begin{equation}
\label{eq:Jensen}
\exp\biggl(  \frac{1}{T} \int\limits_{0}^{T}
\ln\dfrac{w_{j}(t)}{w_{i}(t)} \, dt \biggr) \le \frac{1}{T}
\int\limits_{0}^{T} \frac{w_{j}(t)}{w_{i}(t)} \, dt.
\end{equation}
Denote
\begin{equation*}
\hat{w}_{i} := \exp\biggl(  \frac{1}{T} \int\limits_{0}^{T}
\ln{w_{i}(t)}\, dt \biggr),
\end{equation*}
and similarly for $\hat{w}_{j}$.  We can write \eqref{eq:Jensen} in
the form:
\begin{equation*}
\frac{\hat{w}_{j}}{\hat{w}_{i}} \le \frac{1}{T} \int\limits_{0}^{T}
\frac{w_{j}(t)}{w_{i}(t)} \, dt,
\end{equation*}
from which it follows that, for each $i$,
\begin{equation*}
\frac{1}{\hat{w}_{i}} \sum\limits_{j \ne i} c_{ij} \hat{w}_{j} \le
\frac{1}{T} \int\limits_{0}^{T} \Bigl( \frac{1}{w_{i}(t)}
\sum\limits_{j \ne i} c_{ij} w_{j}(t) \Bigr) \, dt.
\end{equation*}
As $w(\cdot)$ is a solution of \eqref{eq:system-normalized}, we have
\begin{equation*}
\frac{1}{v_{i}(t)} \sum\limits_{j \ne i} c_{ij} v_{j}(t) =
\frac{v'_{i}(t)}{v_{i}(t)} - (c_{ii}(t) - \kappa(t)).
\end{equation*}
Consequently,
\begin{equation*}
\sum\limits_{j \ne i} c_{ij} \hat{w}_{j} \le \biggl( \frac{1}{T}
\int\limits_{0}^{T} \frac{w'_{i}(t)}{w_{i}(t)} \, dt + \frac{1}{T}
\int\limits_{0}^{T} \bigl( c_{ii}(t) - \kappa(t) \bigr) \, dt
\biggl) \hat{w}_{i}.
\end{equation*}
Since $w_{i}(T) = w_{i}(0)$, the first summand on the right\nobreakdash-\hspace{0pt}hand side reduces to zero, so we obtain
\begin{equation}
\label{eq:averaged}
\sum\limits_{j \ne i} c_{ij} \hat{w}_{j} - \biggl( \frac{1}{T}
\int\limits_{0}^{T} \bigl( c_{ii}(t) - \kappa(t) \bigr) \, dt \biggr)
\hat{w}_{i} \le 0,
\end{equation}
that is, via~\eqref{eq:ODE-lambda},
\begin{equation*}
(\overline{C} - \Lambda I) \hat{w} \le 0,
\end{equation*}
where $\hat{w}$ is a positive vector. By \cite[Theorem (1.11) on p.~28]{BP}, the dominant eigenvalue of $\overline{C} - \Lambda I$ is nonpositive, that is, $\Lambda \ge \lambdadom(\overline{C})$.
\end{proof}

\part{Parabolic PDEs of Second Order}
\label{part:PDEs}

Throughout the whole Part~\ref{part:PDEs}, $D \subset \RR^n$ is a bounded domain, with boundary $\partial D$ of class $C^3$.  By $\overline{D}$ we denote the closure of $D$, $\overline{D} = D \cup \partial D$.

$L_2(D)$ stands for the Hilbert space of square\nobreakdash-\hspace{0pt}summable \textbf{real} functions on $D$, with the standard inner product $\langle \cdot, \cdot \rangle$ and norm $\norm{\cdot}$.

\section{Preliminaries}
\label{sect:preliminaries-PDEs}

\subsection{Preliminaries: Elliptic PDEs}
\label{subsect:preliminaries-elliptic-PDEs}
In the present subsection we collect some properties of elliptic second order operators of the form $\Delta + c(\cdot) I$ endowed with the Dirichlet boundary conditions, where $\Delta$ is the Laplacian, $\Delta = \partial^2/\partial x_1^2 {} + \ldots + {} \partial^2/\partial x_n^2$, and $c$ belongs to $L_{\infty}(D)$.  Such operators are sometimes called {\em Schr\"odinger operators\/} (on a bounded domain).

By an {\em eigenvalue\/} of the elliptic problem
\begin{equation}
\label{eq:elliptic}
\begin{cases}
{\Delta} u + c(x) u = 0, & x \in D,
\\
u(x) = 0, & x \in \partial D,
\end{cases}
\end{equation}
we understand a complex number $\lambda$ such that there exists a nontrivial (that is, not equal identically to zero) solution of the boundary value problem
\begin{equation*}
\begin{cases}
{\Delta} u + c(x) u = {\lambda} u, & x \in D,
\\
u(x) = 0, & x \in \partial D.
\end{cases}
\end{equation*}
Such a nontrivial solution is called an {\em eigenfunction\/} of~\eqref{eq:elliptic} corresponding to the eigenvalue $\lambda$.  It should be mentioned here that in the literature on PDEs, calculus of variations, etc., by an eigenvalue of~\eqref{eq:elliptic} a complex number $\lambda$ is understood such that there exists a nontrivial solution of ${\Delta} u + (c(x) + \lambda) u = 0$ (plus boundary condition).

It is well known (see, e.g., the classical book~\cite{CourHilb}) that all eigenvalues of \eqref{eq:elliptic} are real, simple and have $-\infty$ as their unique accumulation point. Moreover, an eigenfunction corresponding to the largest eigenvalue (the {\em principal eigenvalue\/} of \eqref{eq:elliptic}) can be chosen to take positive values on $D$.  We will denote the principal eigenvalue of the problem~\eqref{eq:elliptic} by $\lambda_{\mathrm{princ}}(\Delta + c(\cdot)I)$.

Let $H^{1}_{0}(D)$ stand for the Sobolev space of $L_2(D)$ functions whose (distributional) derivatives are in $L_2(D)$ and whose trace on the boundary $\p D$ is zero (for definitions see, e.g., \cite{Evans}).  The well\nobreakdash-\hspace{0pt}known variational characterization of the principal eigenvalue of the Schr\"odinger operator gives us the following.
\begin{proposition}
\label{prop:Rayleigh}
For \textup{(\ref{eq:elliptic})} we have
\begin{equation*}
\int\limits_{D} \left( - \norm{{\nabla} u(x)}_{\RR^n}^2  + c(x) u^2(x) \right) \, dx \le \lambda_{\mathrm{princ}}(\Delta + c(\cdot)I) \int\limits_{D} u^2(x) \, dx, \quad \forall \, u \in H_{0}^{1}(D),
\end{equation*}
where $\norm{\cdot}_{\RR^n}$ denotes the Euclidean norm on $\RR^n$, and the equality holds if~and~only~if $u \equiv 0$ or $u$ is a principal eigenfunction.
\end{proposition}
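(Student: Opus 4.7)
The plan is to prove this by the standard Rayleigh--Ritz argument, using the spectral decomposition of the Schrödinger operator $A := \Delta + c(\cdot) I$ with Dirichlet boundary conditions. First I would recall that, since $c \in L_\infty(D)$ and $\p D$ is of class $C^3$, the operator $A$ realized on $L_2(D)$ with domain $H^2(D) \cap H_0^1(D)$ is self\nobreakdash-\hspace{0pt}adjoint with compact resolvent (this is the content of the classical theory cited before the proposition). Consequently its spectrum is a sequence of real eigenvalues $\lambda_1 > \lambda_2 \ge \lambda_3 \ge \dots \to -\infty$, where $\lambda_1 = \lambda_{\mathrm{princ}}(\Delta + c(\cdot) I)$ is simple, and the associated eigenfunctions $\{\varphi_k\}_{k=1}^{\infty}$ form an orthonormal basis of $L_2(D)$.

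Next, I would introduce the quadratic form
\begin{equation*}
Q(u) := \int\limits_{D} \bigl( -\norm{\nabla u(x)}_{\RR^n}^2 + c(x) u^2(x) \bigr)\, dx, \qquad u \in H_0^1(D),
\end{equation*}
which is continuous on $H_0^1(D)$ because $c$ is bounded. For $\varphi \in H^2(D) \cap H_0^1(D)$, Green's formula gives $Q(\varphi) = \langle A \varphi, \varphi \rangle$. Expanding $u \in H_0^1(D)$ as $u = \sum_k a_k \varphi_k$ with $a_k = \langle u, \varphi_k \rangle$, I would establish the identity
\begin{equation*}
Q(u) = \sum_{k=1}^{\infty} \lambda_k a_k^2,
\end{equation*}
first for $u \in H^2(D) \cap H_0^1(D)$ by the Green identity and the spectral expansion of $Au$, then for general $u \in H_0^1(D)$ by recognizing $H_0^1(D)$ as the form domain of $A$ and invoking density of $H^2 \cap H_0^1$ in $H_0^1$ combined with continuity of $Q$.

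With this identity in hand, the proposition is immediate. Since $\lambda_k \le \lambda_1 = \lambda_{\mathrm{princ}}$ for every $k$,
\begin{equation*}
Q(u) = \sum_{k=1}^{\infty} \lambda_k a_k^2 \le \lambda_{\mathrm{princ}} \sum_{k=1}^{\infty} a_k^2 = \lambda_{\mathrm{princ}} \int\limits_{D} u^2(x)\, dx.
\end{equation*}
Equality forces $(\lambda_{\mathrm{princ}} - \lambda_k) a_k^2 = 0$ for each $k$, hence $a_k = 0$ whenever $\lambda_k < \lambda_{\mathrm{princ}}$. Because $\lambda_{\mathrm{princ}}$ is simple, this means $u = a_1 \varphi_1$, so either $u \equiv 0$ or $u$ is a (nonzero) scalar multiple of the principal eigenfunction, as claimed.

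The main obstacle I expect is the technical step of upgrading the identity $Q(u) = \sum \lambda_k a_k^2$ from the operator domain $H^2 \cap H_0^1$ to the whole form domain $H_0^1(D)$; this hinges on the identification of $H_0^1(D)$ as the Friedrichs form domain of $A$ and on appropriate convergence of partial sums in the graph norm of the square root of $\lambda_{\mathrm{princ}} I - A$. In a survey this should be handled by citation to standard references (e.g.\ the classical treatment in \cite{CourHilb}) rather than reproduced in detail; everything else is a one\nobreakdash-\hspace{0pt}line consequence of Parseval's identity and the ordering of the eigenvalues.
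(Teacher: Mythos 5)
Your proposal is correct, and it is essentially the argument the paper relies on: the paper gives no proof of its own, simply invoking the classical variational (Rayleigh--Ritz) characterization of the principal eigenvalue, which is exactly the spectral-expansion argument you spell out, including the form-domain technicality on $H_0^1(D)$ that you rightly flag as the only delicate step. Your treatment of the equality case via simplicity of $\lambda_{\mathrm{princ}}$ matches the statement as well.
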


\subsection{Preliminaries: Time-Periodic Parabolic PDEs}
\label{subsect:preliminaries-parabolic-PDEs}
Consider a linear parabolic partial differential equation (PDE) of
second order
\begin{equation}
\label{eq:PDE-parabolic}
\begin{cases}
\displaystyle \frac{\partial u}{\partial t} = {\Delta} u + c(t,x) u, & x \in D
\\[1ex]
u(t, x) = 0, & x \in \partial D
\end{cases}
\end{equation}
where $c \colon \RR \times \overline{D} \to \RR$ is a $C^3$ function,
\textbf{time\nobreakdash-\hspace{0pt}periodic} in $t$ with period $T$.

Standard theorems on existence and uniqueness of solutions of linear parabolic PDEs (see, e.g., \cite{F}) state that for any initial moment $s \in \RR$ and any initial value $u_0$, if $u_0$ has second derivatives that are H\"older continuous on $\overline{D}$ and satisfies the boundary condition then there exists a unique function $u(t,x; s, u_0)$, $t \ge s$, $x \in \overline{D}$, such that
\begin{itemize}
\item[(S1)]
the function $\bigl[\, [s, \infty) \times \overline{D} \ni (t, x) \mapsto u(t,x; s, u_0) \in \RR \,\bigr]$ itself, its first and second derivatives in $x$ and its first derivative in $t$ are H\"older continuous, uniformly on sets of the form $[s, S] \times \overline{D}$, $S > s$;
\item[(S2)]
the equation in~\eqref{eq:PDE-parabolic} is satisfied pointwise for any $t > s$ and any $x \in D$, and the boundary condition in~\eqref{eq:PDE-parabolic} is satisfied pointwise for any $t > s$ and any $x \in \p D$ (that is, it is a {\em classical solution\/});
\item[(S3)]
$u(s, x; s, u_0) = u(x)$ for each $x \in D$.
\end{itemize}
However, the Banach spaces of H\"older continuous functions are rather unwieldy to work in.  The Hilbert space $L_2(D)$ is (at~least for our purposes) a better choice, especially because we will need to differentiate (in time) the norm of a solution.  Indeed, for an initial value $u_0$ we can take any function in $L_2(D)$.  Then there exists a unique function $u(t,x; s, u_0)$, $t > s$, $x \in \overline{D}$, such that (S1) and (S2) are satisfied with $s$ replaced with any $s_1 > s$, and the initial condition $u(s, \cdot) = u_0$ is understood in the following sense: $u(t, \cdot; s, u_0)$ converges, as $t \to s^{+}$, to $u_0$ in the $L_2(D)$\nobreakdash-\hspace{0pt}norm (for details see, e.g., \cite[Chapter XVI]{DL}).

Furthermore, for each $t > s$ the linear mapping $\bigl[\, u_0 \mapsto
u(t; \cdot; s, u_0) \,\bigr]$ is a bounded (even completely continuous) linear operator from $L_2(D)$ into $L_2(D)$.  We will denote this operator by $U(t;s)$.

The family $\{ U(t; s) \}_{s \le t}$ satisfies the following equalities
\begin{equation*}
U(t; r) = U(t; s) U(s; r) \quad \text{for any } r \le s \le t,
\end{equation*}
and
\begin{equation*}
U(t + T; s + T) = U(t; s) \quad \text{for any } s \le t.
\end{equation*}

A solution $u \colon (0,\infty) \times \overline{D} \to \RR$ of \eqref{eq:PDE-parabolic} satisfying the initial condition $u(0,\cdot) = u_0$, where $u_0 \in L_2(D)$, is called {\em positive\/} if $u(t,x) > 0$ for all $t > 0$ and all $x \in D$.  It follows from standard maximum principles (see \cite[Chapter 3]{PrWein}) that a necessary and sufficient condition for a solution of \eqref{eq:PDE-parabolic} to be positive is that $u_0(x) \ge 0$ for a.e.\ $x \in D$ and $u_0 \not\equiv 0$.

A solution $u \colon \RR \times \overline{D} \to \RR$ of \eqref{eq:PDE-parabolic} is called {\em globally positive\/} if $u(t,x) > 0$ for all $t \in \RR$ and all $x \in D$.

\section{Principal Lyapunov Exponent for Time-Periodic Parabolic PDEs of Second Order}

The theory of principal Lyapunov exponents for time\nobreakdash-\hspace{0pt}periodic PDEs of second order was developed by P.\ Hess and presented in his monograph~\cite{Hess}.
\begin{theorem}
\label{thm:PDE-Floquet}
For a linear $T$\nobreakdash-\hspace{0pt}periodic problem \eqref{eq:PDE-parabolic} there exist and are uniquely determined: a number $\rho > 0$, and a globally positive solution $v(t,x)$ of \eqref{eq:PDE-parabolic}, such that
\begin{itemize}
\item[{\rm (i)}]
$v(T, x) = {\rho} v(0, x)$ for all $x \in D$;
\item[{\rm (ii)}]
$\norm{v(0, \cdot)} = 1$.
\end{itemize}
Consequently, the {\em logarithmic growth rate\/} of $v(t,x)$, $\lim\limits_{t \to \infty} \frac{1}{t}\ln\norm{v(t,\cdot)}$, exists and is equal to $\frac{1}{T}\ln\rho$.  Moreover,
\begin{itemize}
\item[{\rm (iii)}]
\begin{equation*}
\lim\limits_{t \to \infty} \frac{\ln{\norm{U(t;0)}}}{t} = \frac{\ln{\rho}}{T};
\end{equation*}
\item[{\rm (iv)}]
for any positive solution $u(t,x)$ of \eqref{eq:PDE-parabolic} there holds
\begin{equation*}
\lim\limits_{t \to \infty} \frac{\ln{\norm{u(t, \cdot)}}}{t} =
\frac{\ln{\rho}}{T}.
\end{equation*}
\end{itemize}
\end{theorem}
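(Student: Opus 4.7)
The plan is to reduce the statement to an infinite-dimensional Perron--Frobenius (Krein--Rutman) theorem applied to the monodromy operator $U(T;0)$, in complete analogy with the ODE case. First, I would establish that $U(T;0)$, viewed as a bounded linear operator on $L_2(D)$ (or, better, on the ordered Banach space $C_0(\overline{D}) := \{u \in C(\overline{D}) : u|_{\partial D} = 0\}$, whose positive cone has nonempty interior in a suitable subcone controlled by the boundary normal derivative), is compact and strongly positive. Compactness follows from parabolic smoothing: for $t > 0$, $U(t;0)$ sends $L_2(D)$ into a space of H\"older-continuous functions by the regularity theory of~\cite{F}. Strong positivity, in the sense that $U(T;0)$ sends every nonnegative, nontrivial initial datum into a function strictly positive in $D$ with negative outward normal derivative on $\partial D$, follows from the parabolic strong maximum principle and the Hopf boundary-point lemma, which together encode the positivity claim stated just after~\eqref{eq:PDE-parabolic}.

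Second, with $U(T;0)$ compact and strongly positive, I would invoke the strong form of the Krein--Rutman theorem to produce a positive real number $\rho$ equal to the spectral radius of $U(T;0)$, which is a simple eigenvalue, has a (unique up to positive scalar) strictly positive eigenfunction $\varphi$ in the relevant cone, and dominates in modulus every other spectral value. That $\rho > 0$ follows from strong positivity: iterating $U(T;0)$ on any nontrivial nonnegative function yields an orbit that cannot decay to zero, because $\langle U(nT;0) u_0, \psi \rangle$ remains bounded below by a positive multiple of $\rho^n$ against a positive adjoint eigenfunctional $\psi$. Choose $\varphi$ with $\norm{\varphi} = 1$, set $v_0 := \varphi$, and define
\begin{equation*}
v(t,x) := \bigl( U(t;0) v_0 \bigr)(x), \qquad t \ge 0,
\end{equation*}
extending to all $t \in \RR$ by $v(t+T, \cdot) = \rho\, v(t, \cdot)$, consistently with $T$-periodicity of~\eqref{eq:PDE-parabolic}. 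Properties (i) and (ii) then hold by construction, and global positivity follows because at each negative time $v(t,\cdot) = \rho^{-n} v(t+nT, \cdot)$ for $n$ large enough, and $v(t+nT, \cdot) = U(t+nT;0) v_0$ is strictly positive in $D$ by the strong maximum principle.

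Third, uniqueness is a direct consequence of the simplicity of $\rho$ in Krein--Rutman: any pair $(\rho', v')$ with $v'(0,\cdot)$ a positive function satisfying $v'(T,\cdot) = \rho' v'(0,\cdot)$ is an eigenpair of $U(T;0)$ with positive eigenfunction, hence $\rho' = \rho$ and $v'(0,\cdot)$ is a positive scalar multiple of $v_0$, and the normalization fixes the scalar. For part (iii), the spectral radius formula gives
\begin{equation*}
\lim_{n \to \infty} \frac{1}{n} \ln \norm{U(T;0)^n} = \ln \rho,
\end{equation*}
and writing any $t > 0$ as $t = kT + \theta$ with $\theta \in [0,T)$, one has $U(t;0) = U(\theta; 0) U(T;0)^k$, so $\frac{1}{t}\ln\norm{U(t;0)} \to \frac{1}{T}\ln\rho$ because $\norm{U(\theta;0)}$ is uniformly bounded in $\theta \in [0,T]$. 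Part (iv) uses that $\rho$ is isolated and simple: for any positive initial datum $u_0$, the spectral decomposition $u_0 = c\, v_0 + u_0^{\perp}$ relative to the Krein--Rutman projection has $c = \langle \psi, u_0 \rangle / \langle \psi, v_0 \rangle > 0$, where $\psi$ is the strictly positive adjoint eigenfunctional; the remainder satisfies $\limsup_{n \to \infty} \frac{1}{nT}\ln\norm{U(nT;0) u_0^{\perp}} < \frac{\ln \rho}{T}$, and the claim follows.

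The main obstacle is verifying strong positivity of $U(T;0)$ in a function-space setting where the positive cone has nonempty interior, so that the strong (rather than merely the weak) form of the Krein--Rutman theorem applies and yields simplicity of $\rho$ together with strict spectral gap; this is the step that genuinely requires the parabolic Hopf lemma, not merely the weak maximum principle, and it is also the step underlying both uniqueness and the attracting property in part (iv).
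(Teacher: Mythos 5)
Your overall route coincides with the paper's: following \cite{Hess}, the paper also obtains $\rho$ and $v$ by applying the Kre\u{\i}n--Rutman theorem (\cite[Theorems 19.2 and 19.3]{Deim}) to the completely continuous monodromy operator $U(T;0)$, whose positivity properties come from standard parabolic maximum principles, and it takes $v(t,\cdot)$ to be the solution emanating from the normalized principal eigenfunction; parts (iii)--(iv) are then read off from the resulting spectral structure essentially as you describe.

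Where your argument has a genuine gap is the uniqueness of the \emph{globally} positive solution. Simplicity of $\rho$ does force $\rho'=\rho$ and $v'(0,\cdot)=v(0,\cdot)$, hence $v'(t,\cdot)=v(t,\cdot)$ for $t\ge 0$ by forward uniqueness; but $v'$ is defined for all $t\in\RR$, and condition (i) fixes its values only at the times $0$ and $T$, not its backward continuation. To conclude $v'(t,\cdot)=v(t,\cdot)$ for $t<0$ you need to know that a solution of the parabolic problem is determined by its value at a later time (backward uniqueness, e.g.\ via log-convexity arguments), or else argue through Harnack-type estimates. This is exactly the step the paper flags as delicate: it remarks that uniqueness of globally positive solutions is a ``not very straightforward'' consequence of parabolic Harnack estimates, citing \cite{Ni}, \cite{Mi0} and \cite{HuPoSa}, rather than a formal corollary of Kre\u{\i}n--Rutman. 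Two smaller technical points to repair: the positive cone of the space of continuous functions vanishing on $\partial D$ has empty interior, so the strong Kre\u{\i}n--Rutman theorem cannot be applied there as written --- work instead in a $C^1$-type space with the cone of functions positive in $D$ having negative outward normal derivative on $\partial D$ (where the parabolic Hopf lemma places $U(T;0)u_0$), or use the variant for irreducible compact positive operators on Banach lattices; and in (iv), since $u_0$ is merely in $L_2(D)$, first apply $U(s;0)$ for a small $s>0$ to land in that cone before invoking strict positivity of the adjoint eigenfunctional.
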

The proof of the above theorem (except uniqueness of globally positive solutions) was given in~\cite{Hess}.  In it one uses the fact that, by standard parabolic maximum principles, the completely continuous operator $U(T;0)$ satisfies some positivity properties, which allows us to use the Kre\u{\i}n--Rutman theorem (\cite[Theorems 19.2 and 19.3]{Deim}) to conclude that the spectral radius $\rho$ of $U(T;0)$ is a positive simple eigenvalue and that an eigenfunction pertaining to it can be taken to assume positive values on $D$.  As $v(t,x)$ we take the solution of~\eqref{eq:PDE-parabolic} which equals, for $t = 0$, such a normalized eigenfunction.
\begin{remark}
{\em In~\cite{Hess} it was also shown that $\Lambda$ is an eigenvalue of the differential operator (in our notation)
\begin{equation*}
\Bigl[ u \mapsto \frac{\p u}{\p t} - {\Delta} u - c_0(t,x) u \Bigr]
\end{equation*}
acting on some Banach space of H\"older continuous functions defined on $\RR \times \overline{D}$, $T$\nobreakdash-\hspace{0pt}periodic in $t$, and that $e^{{-\Lambda}t} v(t,x)$ is an eigenfunction corresponding to that eigenvalue. We will not pursue that (potentially promising) way of thinking in the present review.}
\end{remark}

\begin{remark}
{\em The fact that $v(t,x)$ is a unique, up~to multiplication by positive scalars, globally positive solution of~\eqref{eq:PDE-parabolic}, is a (not very straightforward) consequence of the parabolic Harnack estimates, see, e.g., \cite{Ni}, \cite{Mi0}, or~\cite{HuPoSa}.

It appears to the author that the uniqueness of globally positive solutions is not needed to obtain the majority of (all?) results mentioned in the present survey, but it greatly simplifies their proofs.}
\end{remark}

The number
\begin{equation*}
\Lambda := \frac{\ln{\rho}}{T}
\end{equation*}
is called the {\em principal Lyapunov exponent\/} (sometimes {\em principal Floquet exponent\/}) of \eqref{eq:PDE-parabolic}.

\medskip
Since $v(t,x)$ is a classical solution, we can prove the following analog of~\eqref{eq:ODE-lambda}:
\begin{equation}
\label{eq:PDE-lambda}
\Lambda = \frac{1}{T} \int\limits_{0}^{T} \frac{\langle (\Delta +
c(t,\cdot)) v(t, \cdot), v(t,\cdot) \rangle}{\norm{v(t, \cdot)}^2} \, dt.
\end{equation}

\section{Principal Lyapunov Exponents for Parabolic PDEs: ``Trivial'' Estimate}
\label{section:PDE-trivial}
For a linear $T$\nobreakdash-\hspace{0pt}periodic problem \eqref{eq:PDE-parabolic}, denote
\begin{equation*}
\begin{aligned}
c_{\mathrm{min}}(x) & := \min\limits_{t \in [0, T]} c(t,x), \quad x \in \overline{D},
\\
c_{\mathrm{max}}(x) & := \max\limits_{t \in [0, T]} c(t,x), \quad x \in \overline{D}.
\end{aligned}
\end{equation*}
\begin{proposition}
\label{prop:PDE-trivial}
For a linear $T$\nobreakdash-\hspace{0pt}periodic problem
\eqref{eq:PDE-parabolic} there holds
\begin{equation*}
\lambda_{\mathrm{princ}}(\Delta + c_{\mathrm{min}}(\cdot) I) \le \Lambda \le
\lambda_{\mathrm{princ}}(\Delta + c_{\mathrm{max}}(\cdot) I).
\end{equation*}
\end{proposition}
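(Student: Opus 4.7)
The plan is to mimic the proof of Proposition~\ref{prop:ODE-trivial}, replacing the quasimonotone ODE comparison by the scalar parabolic maximum principle. Let $v(t,x)$ denote the normalized globally positive solution furnished by Theorem~\ref{thm:PDE-Floquet}. Because $c_{\mathrm{min}}(x) \le c(t,x) \le c_{\mathrm{max}}(x)$ and $v(t,x) > 0$ for $t > 0$, $x \in D$, the classical solution $v$ satisfies pointwise the differential inequalities
\begin{equation*}
\frac{\partial v}{\partial t} - \Delta v - c_{\mathrm{min}}(x) v \;=\; (c(t,x) - c_{\mathrm{min}}(x))\, v \;\ge\; 0
\end{equation*}
and the reverse inequality with $c_{\mathrm{max}}$ in place of $c_{\mathrm{min}}$, together with the homogeneous Dirichlet boundary condition.

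Let $S_{\mathrm{min}}(t)$ and $S_{\mathrm{max}}(t)$ denote the Dirichlet semigroups on $L_2(D)$ generated by the autonomous Schr\"odinger operators $\Delta + c_{\mathrm{min}}(\cdot) I$ and $\Delta + c_{\mathrm{max}}(\cdot) I$ (note that $c_{\mathrm{min}}, c_{\mathrm{max}}$ are continuous on $\overline{D}$, as pointwise extrema over the compact interval $[0,T]$ of a function that is $C^3$ in $(t,x)$). Applying the standard scalar parabolic comparison principle (see, e.g., \cite{PrWein}) with initial data $v(0,\cdot)$ gives, for every $t > 0$ and every $x \in D$,
\begin{equation*}
(S_{\mathrm{min}}(t) v(0, \cdot))(x) \;\le\; v(t, x) \;\le\; (S_{\mathrm{max}}(t) v(0, \cdot))(x).
\end{equation*}
Since all three functions are nonnegative, taking $L_2$-norms preserves the inequalities:
\begin{equation*}
\norm{S_{\mathrm{min}}(t) v(0, \cdot)} \;\le\; \norm{v(t, \cdot)} \;\le\; \norm{S_{\mathrm{max}}(t) v(0, \cdot)}.
\end{equation*}

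It remains to pass to $\lim_{t \to \infty} \frac{1}{t}\ln(\cdot)$. By Theorem~\ref{thm:PDE-Floquet} the middle term has logarithmic growth rate exactly $\Lambda$. For the outer terms I would use that $\Delta + c_{\mathrm{min}}(\cdot) I$ with Dirichlet boundary conditions is self-adjoint on $L_2(D)$ with discrete spectrum accumulating only at $-\infty$, and with largest eigenvalue $\lambda_{\mathrm{princ}}(\Delta + c_{\mathrm{min}}(\cdot) I)$ possessing a strictly positive eigenfunction $\phi_1$. Since $v(0,\cdot) > 0$ on $D$, the coefficient $\langle v(0,\cdot), \phi_1 \rangle$ is strictly positive, and the spectral expansion then forces $\frac{1}{t}\ln\norm{S_{\mathrm{min}}(t) v(0,\cdot)} \to \lambda_{\mathrm{princ}}(\Delta + c_{\mathrm{min}}(\cdot) I)$. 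The same argument applied to $S_{\mathrm{max}}$ finishes the proof.

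The main obstacle I anticipate is not conceptual but technical: one must be sure that the parabolic comparison principle applies to the mixed pair (a classical solution of a time-dependent equation versus a mild/$L_2$ solution of an autonomous equation), and that the resulting pointwise inequality persists up to $t = 0^+$. These are standard but require choosing the appropriate notion of solution and invoking the right version of the maximum principle for potentials that are merely continuous; once that bookkeeping is done, the remainder is a direct reduction to the spectral theory of self-adjoint Schr\"odinger operators on bounded domains.
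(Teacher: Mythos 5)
Your argument is exactly the paper's: the paper proves Proposition~\ref{prop:PDE-trivial} by copying the proof of Proposition~\ref{prop:ODE-trivial}, replacing the quasimonotone ODE comparison by parabolic differential inequality/maximum principle comparison (citing \cite{PrWein}), which is precisely what you do, with the growth rates of the autonomous comparison semigroups identified via the principal eigenvalue just as in the paper's appeal to the autonomous analog of Theorem~\ref{thm:PDE-Floquet}(iv). The proposal is correct and takes essentially the same route.
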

The proof of the above result is almost a copy of the proof of Proposition~\ref{prop:ODE-trivial}, with comparison theorems for systems of ordinary differential inequalities replaced by theorems on parabolic partial differential inequalities (see, e.g., \cite[Chapter 3]{PrWein}).

\section{Parabolic PDEs: An Estimate from Above}
\label{section:PDEs-above}
Note that for each $t \in \RR$ integration~by~parts gives that
\begin{equation*}
\left\langle (\Delta + c(\cdot)) v(t, \cdot), v(t, \cdot) \right\rangle = \int\limits_{D} \left( - \norm{{\nabla}_{x} v(t,x)}_{\RR^n}^2  + c(t,x) v^2(t,x) \right) \, dx.
\end{equation*}
By applying the above formula into~\eqref{eq:PDE-lambda} we obtain, via Proposition~\ref{prop:Rayleigh}, the following (folk?) result:
\begin{proposition}
\label{prop:PDE-above}
For a linear $T$\nobreakdash-\hspace{0pt}periodic problem \eqref{eq:PDE-parabolic} there holds
\begin{equation*}
\Lambda \le \frac{1}{T} \int\limits_{0}^{T} \lambda_{\mathrm{princ}}({\Delta} +
c(t, \cdot)I) \, dt.
\end{equation*}
\end{proposition}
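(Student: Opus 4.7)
The plan is to deduce the upper estimate directly from the integral representation~\eqref{eq:PDE-lambda} by bounding the integrand pointwise in $t$ by the instantaneous principal eigenvalue of the elliptic operator $\Delta + c(t,\cdot)I$, and then averaging over $[0,T]$.

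First, I would fix $t \in \RR$ and apply integration by parts (Green's formula) to the numerator appearing in~\eqref{eq:PDE-lambda}. Since $v$ is a classical solution of~\eqref{eq:PDE-parabolic}, the slice $v(t,\cdot)$ is smooth on $\overline{D}$ and vanishes on $\partial D$; in particular it lies in $H_0^1(D)$, so the boundary contributions disappear and one obtains the identity
\[
\langle (\Delta + c(t,\cdot))v(t,\cdot),\, v(t,\cdot)\rangle = \int\limits_D \bigl(-\norm{\nabla_{x} v(t,x)}_{\RR^n}^2 + c(t,x)\, v^2(t,x)\bigr)\, dx,
\]
which is precisely the formula displayed in the paragraph immediately preceding the proposition.

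Next, I would invoke Proposition~\ref{prop:Rayleigh} with test function $u = v(t,\cdot)$ and potential $c = c(t,\cdot)$. Since $v(t,\cdot) \in H_0^1(D)$ is nontrivial (it is even strictly positive on $D$), this gives, pointwise in $t$,
\[
\frac{\langle (\Delta + c(t,\cdot))v(t,\cdot),\, v(t,\cdot)\rangle}{\norm{v(t,\cdot)}^2} \le \lambda_{\mathrm{princ}}\bigl(\Delta + c(t,\cdot)I\bigr).
\]
Inserting this inequality into~\eqref{eq:PDE-lambda} and integrating over $[0,T]$ yields the claimed bound.

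No step presents a genuine obstacle; the argument is the parabolic counterpart of the symmetrization argument leading to Proposition~\ref{prop:ODEs-above}. The two items that warrant a brief verification are: the regularity of $v(t,\cdot)$ sufficient for the integration by parts (secured by the classical-solution framework (S1)--(S2) together with the Dirichlet boundary condition), and the integrability of $t \mapsto \lambda_{\mathrm{princ}}(\Delta + c(t,\cdot)I)$ on $[0,T]$, which follows from the continuous dependence of the principal eigenvalue of a Schr\"odinger operator on its potential in the $L_{\infty}$-norm combined with the continuity of $c$ in $t$.
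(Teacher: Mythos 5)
Your argument is exactly the paper's: integrate by parts to rewrite the numerator of~\eqref{eq:PDE-lambda} as the Dirichlet-form expression, bound it pointwise in $t$ via the variational characterization in Proposition~\ref{prop:Rayleigh} applied to the test function $v(t,\cdot)\in H_0^1(D)$, and average over the period. The proof is correct and coincides with the one indicated in the paper, including the remarks on regularity that justify the integration by parts.
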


\section{Parabolic PDEs: An Estimate from Below}
\label{section:PDEs-below}
Denote
\begin{equation*}
\overline{c}(x) = \frac{1}{T} \int\limits_{0}^{T} c(t, x) \, dt, \quad x
\in \overline{D}.
\end{equation*}
The following result was proved in~\cite{HShV}:
\begin{theorem}
\label{thm:PDE-averaging}
For a linear $T$\nobreakdash-\hspace{0pt}periodic problem \eqref{eq:PDE-parabolic} we have
\begin{equation*}
\Lambda \ge \lambda_{\mathrm{princ}}({\Delta} + \overline{c}(\cdot)I).
\end{equation*}
\end{theorem}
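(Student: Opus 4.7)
The plan is to imitate the proof of Theorem~\ref{thm:averaging-ODEs}, replacing the sum $\sum_{j\ne i} c_{ij}\, w_j/w_i$ by the pointwise quantity $\Delta w/w$, Jensen's inequality on each coordinate ratio $w_j/w_i$ by a pointwise (in $x$) Cauchy--Schwarz inequality on $\nabla \ln w$ in the $t$-variable, and the concluding Perron--Frobenius argument by a Green's identity paired against the principal eigenfunction of $\Delta + \overline{c}(\cdot)I$.

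First, normalise by setting $w(t, x) := v(t, x)/\norm{v(t, \cdot)}$. Since $v(T, \cdot) = \rho\, v(0, \cdot)$ forces $\norm{v(T, \cdot)} = \rho\, \norm{v(0, \cdot)}$, the function $w$ is strictly positive on $\RR \times D$, vanishes on $\RR \times \partial D$, and is genuinely $T$-periodic in $t$. It satisfies
\begin{equation*}
\frac{\partial w}{\partial t} = \Delta w + (c(t, x) - \kappa(t))\, w,
\end{equation*}
where $\kappa(t) := \langle (\Delta + c(t,\cdot)I)\, v(t, \cdot), v(t, \cdot)\rangle/\norm{v(t, \cdot)}^2$ satisfies $\frac{1}{T}\int_0^T \kappa(t)\,dt = \Lambda$ by~\eqref{eq:PDE-lambda}. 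Dividing by $w$ and invoking the algebraic identity $\Delta w/w = \Delta(\ln w) + \norm{\nabla \ln w}_{\RR^n}^2$, I would rewrite this as $\partial_t \ln w = \Delta(\ln w) + \norm{\nabla \ln w}_{\RR^n}^2 + c(t, x) - \kappa(t)$. Integrating over $t \in [0, T]$ kills the left-hand side by the $T$-periodicity of $\ln w$, and setting $\phi(x) := \frac{1}{T}\int_0^T \ln w(t, x)\, dt$ yields the pointwise identity
\begin{equation*}
\Delta \phi(x) + \frac{1}{T}\int\limits_0^T \norm{\nabla \ln w(t,x)}_{\RR^n}^2 \, dt + \overline{c}(x) = \Lambda, \quad x \in D.
\end{equation*}
Cauchy--Schwarz in $t$ gives $\norm{\nabla \phi(x)}_{\RR^n}^2 \le \frac{1}{T}\int_0^T \norm{\nabla \ln w(t,x)}_{\RR^n}^2\, dt$, so introducing $\hat w := e^{\phi}$ and using $\Delta \hat w/\hat w = \Delta \phi + \norm{\nabla \phi}_{\RR^n}^2$, one obtains the subsolution inequality
\begin{equation*}
\Delta \hat w(x) + \overline{c}(x)\, \hat w(x) \le \Lambda\, \hat w(x), \quad x \in D.
\end{equation*}

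To close the argument I would observe that $\hat w$ is strictly positive in $D$ and vanishes on $\partial D$ (parabolic Hopf-type estimates give $w(t, x) \asymp \mathrm{dist}(x, \partial D)$ uniformly in $t$, so $\phi(x) \to -\infty$ and $\hat w \asymp \mathrm{dist}(x, \partial D)$ near $\partial D$). Multiplying the subsolution inequality by the strictly positive principal eigenfunction $\psi$ of $\Delta + \overline{c}(\cdot)I$, integrating over $D$, and applying Green's identity (both $\psi$ and $\hat w$ vanish on $\partial D$) to transfer the Laplacian onto $\psi$, one arrives at
\begin{equation*}
\bigl(\lambda_{\mathrm{princ}}(\Delta + \overline{c}(\cdot)I) - \Lambda\bigr) \int\limits_D \psi(x)\, \hat w(x)\, dx \le 0,
\end{equation*}
and the claimed inequality follows since $\int_D \psi \hat w\, dx > 0$. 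The main obstacle is technical rather than conceptual: ensuring $\hat w \in H^1_0(D)$ (so that Green's identity applies rigorously) demands the uniform two-sided boundary comparison on $w$ mentioned above, and justifying the exchanges of $\Delta$ and $\nabla$ with $\int_0^T$ in the derivation of the subsolution inequality requires enough interior smoothness of the map $t \mapsto \ln w(t, \cdot)$. Both ingredients are standard consequences of parabolic Schauder and Harnack estimates applied to the $T$-periodic function $w$.
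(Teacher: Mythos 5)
Your argument is correct, and it is exactly the PDE transcription of the paper's proof of Theorem~\ref{thm:averaging-ODEs} (which is all the paper itself offers here, since for Theorem~\ref{thm:PDE-averaging} it only cites \cite{HShV}): Jensen's inequality on the ratios $w_j/w_i$ becomes Jensen/Cauchy--Schwarz in $t$ applied to $\nabla\ln w$, and the Perron--Frobenius step ``$(\overline{C}-\Lambda I)\hat w\le 0$ with $\hat w$ positive'' becomes the pairing of the elliptic inequality $\Delta\hat w+\overline{c}\,\hat w\le\Lambda\hat w$ with the positive principal eigenfunction $\psi$ of $\Delta+\overline{c}(\cdot)I$. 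All the displayed identities check out: $w=v/\norm{v(t,\cdot)}$ is $T$\nobreakdash-periodic, $\partial_t\ln w=\Delta\ln w+\norm{\nabla\ln w}_{\RR^n}^2+c-\kappa$ in $D$, time-averaging kills $\partial_t\ln w$ and, with \eqref{eq:PDE-lambda}, gives the subsolution inequality for $\hat w=e^{\phi}$, and Green's identity against $\psi$ yields $\lambda_{\mathrm{princ}}(\Delta+\overline{c}(\cdot)I)\le\Lambda$. The genuine work is where you say it is: one needs $w(t,x)\asymp\mathrm{dist}(x,\p D)$ uniformly in $t$ (Hopf-type boundary estimates for the positive periodic solution), which gives $\hat w\asymp\mathrm{dist}(x,\p D)$ and, via $\nabla\hat w=\hat w\,\nabla\phi$ with $\abs{\nabla\phi}\le C/\mathrm{dist}(x,\p D)$, shows $\hat w$ is Lipschitz and vanishes on $\p D$, so the boundary terms in Green's identity disappear; without this uniform two-sided estimate the final integration by parts is not justified.

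It is worth noting that these boundary technicalities can be bypassed by reversing the order of operations, which is essentially how \cite{HShV} argue: pair $v_t/v=\Delta v/v+c$ directly with $\psi^2$ and integrate by parts, completing the square to get $\int_D\psi^2\,\frac{\Delta v}{v}\,dx\ge-\int_D\norm{\nabla\psi}_{\RR^n}^2\,dx$, hence $\frac{d}{dt}\int_D\psi^2\ln v\,dx\ge-\int_D\norm{\nabla\psi}_{\RR^n}^2\,dx+\int_Dc(t,x)\psi^2\,dx$; integrating over $[0,T]$ and using $v(T,\cdot)=\rho\,v(0,\cdot)$ gives $\Lambda\ge\lambda_{\mathrm{princ}}(\Delta+\overline{c}(\cdot)I)$ with only interior regularity of $v$ plus the mild boundary fact $\psi^2/v\to 0$ on $\p D$. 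Your route buys a clean intermediate object (an explicit positive elliptic subsolution $\hat w$ for the averaged operator, the exact analog of $\hat w$ in Theorem~\ref{thm:averaging-ODEs}); the other arrangement buys a shorter rigorous proof. Either way the statement is established.
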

Also, in~\cite{HShV} it was proved that the equality holds if~and~only~if $c(t,x) = c_1(t) + c_2(x)$ for some $c_1$, $c_2$.

\part{Generalizations to Nonperiodic Time Dependence}
\label{part:generalizations-non-periodic}
In the present Part we present generalizations of (some) results in Parts~\ref{part:ODEs} and~\ref{part:PDEs} to the case when the dependence of the matrices $C$ or functions $c$ on time is almost~periodic (in Section~\ref{section:almost-periodic}), general nonautonomous (in Section~\ref{section:general-nonautonomous}), or, finally, random (in Section~\ref{section:random}).

\section{Almost Periodic Case}
\label{section:almost-periodic}
Assume that in
\begin{equation}
\label{eq:ODE-almost-periodic}
\frac{du}{dt} = C(t) u,
\end{equation}
the matrix function $C(\cdot)$ is (Bohr) almost periodic (see, e.g., \cite{Fink}), with the property that there exists $\delta > 0$ such that $c_{ij}(t) \ge \delta$ for all $t \in \RR$, $i \ne j$.  Then Theorem~\ref{thm:Floquet-ODE} has the following counterpart:  There exists a unique globally positive solution $v(t)$ of~\eqref{eq:ODE-almost-periodic} such that $\norm{v(0)} = 1$, and its logarithmic growth rate, $\lim\limits_{t \to \infty} \frac{1}{t}\ln\norm{v(t)}$, exists (denote it by $\Lambda$).  Further, it can be proved that
\begin{itemize}
\item
\begin{equation*}
\lim\limits_{t \to \infty} \frac{\ln{\norm{U(t;0)}}}{t} = \Lambda;
\end{equation*}
\item
for any positive solution $u(t)$ of \eqref{eq:ODE-almost-periodic} there holds
\begin{equation*}
\lim\limits_{t \to \infty} \frac{\ln{\norm{u(t)}}}{t} = \Lambda.
\end{equation*}
\end{itemize}
The main difference between the periodic and almost periodic cases is that in the latter there is no linear operator (like $U(T;0)$ in the former) whose spectral radius is an eigenvalue such that a corresponding eigenvector serves as a starting point of a globally positive solution.  Instead, the construction is much more involved.  However, when we have already constructed such a solution, the proofs of estimates are much the same.

\medskip
A similar situation holds for PDEs
\begin{equation}
\label{eq:PDE-almost-periodic}
\begin{cases}
\displaystyle \frac{\partial u}{\partial t} = {\Delta} u + c(t,x) u, & x \in D,
\\[1ex]
u(t,x) = 0, & x \in \partial D,
\end{cases}
\end{equation}
where $c \colon \RR \times \overline{D} \to \RR$ is a sufficiently regular function, almost periodic in $t$ uniformly in $\overline{D}$ (see~\cite{ShYi}).

\section{General Nonautonomous Case}
\label{section:general-nonautonomous}
In a general case of nonautonomous strongly cooperative systems of linear ODEs:
\begin{equation}
\label{eq:ODE-nonautonomous}
\frac{du}{dt} = C(t) u,
\end{equation}
where $C \colon \RR \to \RR^{N \times N}$ is bounded and (globally uniformly) Lipschitz continuous, with the property that there exists $\delta > 0$ such that $c_{ij}(t) \ge \delta$ for all $t \in \RR$, $i \ne j$, it can be proved that there exists a unique (up~to multiplication by positive scalars) globally positive solution $v(t)$ of~\eqref{eq:ODE-nonautonomous} such that $\norm{v(0)} = 1$.  However, in~general one can prove only that there are reals $\Lambda_{\mathrm{min}} \le \Lambda_{\mathrm{max}}$ such that
\begin{equation*}
\liminf_{t - s \to \infty} \frac{\ln\norm{v(t)} - \ln\norm{v(s)}}{t - s} = \Lambda_{\mathrm{min}} \text{ and } \limsup_{t - s \to \infty} \frac{\ln\norm{v(t)} - \ln\norm{v(s)}}{t - s} = \Lambda_{\mathrm{max}}.
\end{equation*}
The interval $[\Lambda_{\mathrm{min}}, \Lambda_{\mathrm{max}}]$ is called the {\em principal spectrum\/} of~\eqref{eq:ODE-nonautonomous}.

\smallskip
The situation becomes slightly different in the {\em forward nonautonomous\/} case, that is, when the matrix function $C(\cdot)$ is defined and satisfies the corresponding properties on $[0, \infty)$ only.  Namely, there is obviously no globally positive solution of the system, but there are reals $\Lambda_{\mathrm{min}} \le \Lambda_{\mathrm{max}}$ such that
\begin{equation*}
\liminf_{\substack{t - s \to \infty \\ s \to \infty}} \frac{\ln\norm{u(t)} - \ln\norm{u(s)}}{t - s} = \Lambda_{\mathrm{min}} \text{ and } \limsup_{\substack{t - s \to \infty \\ s \to \infty}} \frac{\ln\norm{u(t)} - \ln\norm{u(s)}}{t - s} = \Lambda_{\mathrm{max}}.
\end{equation*}
for \textbf{any} positive solution.  Similarly, $[\Lambda_{\mathrm{min}},  \Lambda_{\mathrm{max}}]$ is called the {\em principal spectrum\/} of the forward nonautonomous~\eqref{eq:ODE-nonautonomous}.

\medskip
For an extension of the above theory to linear parabolic PDEs of second order, see the monograph~\cite{MiSh} (in the nonautonomous case) and the paper~\cite{MiSh-Fields} (in the forward nonautonomous case).

\smallskip
Many of the results mentioned in Parts~\ref{part:ODEs} and~\ref{part:PDEs} carry over to the nonautonomous case and/or forward nonautonomous case, with suitable modifications.

\section{Random Case}
\label{section:random}
Random systems form another generalization of time\nobreakdash-\hspace{0pt}periodic systems.  To give a flavor of those generalizations we briefly describe, without going into details, random systems of linear strongly cooperative ODEs.

Assume that $(\Omega, \mathfrak{F}, \PP)$ is a measure space: $\Omega$ is a set, $\mathfrak{F}$ is a $\sigma$\nobreakdash-\hspace{0pt}algebra of subsets of $\Omega$ and $\PP$ is a probability measure on $\mathfrak{F}$.  We always assume that $\PP$ is complete.

We let $\theta = (\theta_t)_{t \in \RR}$ be a $\PP$\nobreakdash-\hspace{0pt}preserving ergodic dynamical system on a measure space $(\Omega, \mathfrak{F}, \PP)$.  For $\omega \in \Omega$ and $t \in \RR$ we usually write $\omega \cdot t$ instead~of $\theta_{t}\omega$.

\smallskip
Consider a family of linear systems of ODEs
\begin{equation}
\label{eq:ODE-random}
\frac{du}{dt} = C(\omega \cdot t) u
\end{equation}
indexed by $\omega \in \Omega$.  Here $C \colon \Omega \to \RR^{N \times N}$ is measurable.  Sometimes we will write \eqref{eq:ODE-random}$_{\omega}$ to emphasize that we consider the system for a fixed $\omega \in \Omega$.

\smallskip
We present now, in a rather cursory way, the main points of the theory of random strongly cooperative systems of linear ODEs as developed in~\cite{MiSh2}.  That theory is a specialization of an abstract theory of random monotone dynamical systems presented in~\cite{MiSh1}.

Firstly, one assumes that for each $\omega \in \Omega$ the function $\bigl[\, \RR \ni t \mapsto C(\omega \cdot t) \in \RR^{N \times N} \,\bigr]$ is continuous.  This guarantees the existence of solutions of~\eqref{eq:ODE-random}$_{\omega}$.  Let $U_{\omega}(t;s)$ stand for the corresponding transition matrix.

Secondly, one assumes that the matrix function $C \colon \Omega \to \RR^{N \times N}$ belongs to $L_{1}\bigl((\Omega, \mathcal{F}, \PP), \RR^{N \times N}\bigr)$.  That assumption guarantees that \eqref{eq:ODE-random} gives rise to a random (skew\nobreakdash-\hspace{0pt}product) dynamical system to which ergodic theorems can be applied (see \cite[Chapter 2, in particular Example 2.2.8]{Arn}).

Regarding strong cooperativity, the condition alone that $C(\omega) \in \mathcal{M}_N$  does not suffice to reproduce many features of the theory of principal Lyapunov exponents for time\nobreakdash-\hspace{0pt}periodic systems.   In~\cite{MiSh2} sufficient conditions are given in the form of some functions, expressed in~terms of the entries of $C(\omega)$, belonging to $L_1\bigl(\OFP, \RR\bigr)$.  For~instance, those conditions are satisfied if the matrices $C(\omega)$ are bounded uniformly in $\omega \in \Omega$ and their off\nobreakdash-\hspace{0pt}diagonal entries are bounded away from zero uniformly in $\omega \in \Omega$.

It is proved then that there exist and are uniquely determined: a real number $\Lambda$ and a family $\{v_{\omega}(t)\}$ of globally positive solutions of~\eqref{eq:ODE-random}$_{\omega}$, indexed by $\omega$ belonging to some set $\Omega' \subset \Omega$ of full measure, with $v_{\omega}(0)$ depending measurably on $\omega \in \Omega'$, satisfying, for each $\omega \in \Omega'$, the following:
\begin{itemize}
\item
$\norm{v_{\omega}(0)} = 1$;
\item
for each $t \in \RR$, $v_{\omega}(t)$ equals $v_{\omega \cdot t}(0)$ multiplied by some positive number (dependent perhaps on $\omega$ and $t$);
\item
\begin{equation*}
\Lambda = \lim\limits_{t \to \infty} \frac{\ln\norm{v_{\omega}(t)}}{t}.
\end{equation*}
\end{itemize}
Further, it is proved that for any $\omega \in \Omega'$,
\begin{equation*}
\lim\limits_{t \to \infty} \frac{\ln\norm{U_{\omega}(t;0)}}{t} = \Lambda,
\end{equation*}
and that for any positive solution of~\eqref{eq:ODE-random}$_{\omega}$, $\omega \in \Omega'$,
\begin{equation*}
\lim\limits_{t \to \infty} \frac{\ln\norm{u(t)}}{t} = \Lambda.
\end{equation*}
The number $\Lambda$ is called the {\em principal Lyapunov exponent\/} of~\eqref{eq:ODE-random}.

Eq.~\eqref{eq:ODE-lambda} has the following analog:
\begin{equation}
\label{eq:ODE-Lambda-AP}
\Lambda = \lim\limits_{t \to \infty }\frac{1}{t} \int\limits_{0}^{t}
\langle C(\omega) v_{\omega}(\tau), v_{\omega}(\tau) \rangle \, d\tau, \quad \omega \in \Omega'.
\end{equation}
An application of Birkhoff's ergodic theorem (see~\cite[Appendix A]{Arn}) allows us to write
\begin{equation}
\label{eq:ODE-Lambda-AP-2}
\Lambda = \int\limits_{\Omega} \langle C(\omega) v_{\omega}(0), v_{\omega}(0) \rangle \, d\PP(\omega).
\end{equation}
Applying~\eqref{eq:ODE-symmetrisation} we obtain the following generalization of~Proposition~\ref{prop:ODEs-above}:
\begin{equation}
\label{eq:ineq-AP}
\Lambda \le \int\limits_{\Omega} \lambda_{\mathrm{dom}}(C^{\mathrm{S}}(\omega)) \, d\PP(\omega).
\end{equation}

We have the following counterpart of Theorem~\ref{thm:Kolotilina-Frobenius} (see~\cite{Mi}):
\begin{gather}
\Lambda \ge \frac{1}{N} \int\limits_{\Omega} \Bigl(
\trace{C(\omega)} + 2 \sum\limits_{j < k} \sqrt{c_{jk}(\omega)
c_{kj}(\omega)} \; \Bigr) \, d\PP(\omega),
\\
\Lambda \ge \int\limits_{\Omega} \Bigl( \min\limits_{i}
\sum\limits_{j} c_{ij}(\omega) \Bigr) \, d\PP(\omega),
\\
\Lambda \ge \int\limits_{\Omega} \Bigl( \min\limits_{j}
\sum\limits_{j} c_{ij}(\omega) \Bigr) \, d\PP(\omega).
\end{gather}

When the off\nobreakdash-\hspace{0pt}diagonal entries of $C(\omega)$ are independent of~$\omega$ we have the following generalization of Theorem~\ref{thm:averaging-ODEs}:
\begin{equation*}
\Lambda \ge \lambda_{\mathrm{dom}}(\overline{C}),
\end{equation*}
where $\overline{C} = [\overline{c}_{ij}]_{i,j = 1}^{N}$ is the matrix with $\overline{c}_{ij} = c_{ij}$ for $i \ne j$ and
\begin{equation*}
\overline{c}_{ii} = \int\limits_{\Omega} c_{ii}(\omega) \, d\PP(\omega).
\end{equation*}
In other words, the principal Lyapunov exponent of the original system is not smaller than the dominant eigenvalue of the state\nobreakdash-\hspace{0pt}averaged system.

A proof is a copy of the proof of Theorem~\ref{thm:averaging-ODEs}, where we take the globally positive solution $v_{\omega}(t)$ for some $\omega \in \Omega'$, replace $(1/T) \int_{0}^{T} \dots$ with $\lim_{t\to\infty}(1/t) \int_{0}^{t} \dots$, and apply Birkhoff's ergodic theorem.

\subsection{Application to Investigation of Almost Periodic Systems}
\label{subsection:random-AP}
Ideas from the random case can be fruitfully applied to investigating deterministic cases.

\medskip
For example, in the almost periodic case the completion of the set of all time\nobreakdash-\hspace{0pt}translates of $C(\cdot)$ (for~\eqref{eq:ODE-almost-periodic}) in an appropriately chosen metric is a compact metrizable space (the \textit{hull} $\Omega$ of $C(\cdot)$) on which the translation flow is minimal and, moreover, uniquely ergodic (meaning that there exists a unique probability Borel measure $\PP$ on $\Omega$ that is invariant with respect to the translation flow).  For details, see \cite{ShYi}.

In~\cite[Example 4.2]{COS} numerical estimates of $\Lambda$ are given for a two-dimensional system $du/dt = C(t)u$, quasi\nobreakdash-\hspace{0pt}periodic in~time.  There, an approximation of $v_{\omega}(0)$ is obtained, and formula~\ref{eq:ODE-Lambda-AP-2} is applied.

\part{Generalizations to Other Differential Equations}
\label{part:generalization-other-DE}

\section{Cooperative Systems of ODEs}
\label{sect:ODE-cooperative}
System
\begin{equation*}
\frac{du}{dt} = C(t) u,
\end{equation*}
where $C \colon \RR \to \RR^{N \times N}$ is continuous and $T$\nobreakdash-\hspace{0pt}periodic, is called {\em cooperative\/} if for each $t \in \RR$ the matrix $C(t)$ has all off\nobreakdash-\hspace{0pt}diagonal entries nonnegative. A theorem due to M\"uller and Kamke (see, e.g., \cite[Chapter II]{W}, or \cite[Chapter 3]{HiSm}) states that for a cooperative system the transition matrix $U(t;s)$ is nonnegative.

\medskip
For autonomous linear cooperative systems a simple condition is known, called {\em irreducibility\/} (for definition, see, e.g., \cite[Definition~(1.2)]{BP}), which is equivalent to $e^{tC}$ being positive for all $t > 0$.  The ``sufficiency'' part of that condition can be generalized to the case of time\nobreakdash-\hspace{0pt}periodic systems (and even to random systems, see Assumption~(O3) in~\cite{MiSh2}), and then the whole theory presented in Part~\ref{part:ODEs} carries over word~for~word.

\medskip
Even in the case of systems that are only cooperative something can be said.  There need not be a (unique up~to multiplication by positive scalars) globally positive solution $v(t)$ (for that matter, there need not be any globally positive solution), nonetheless the (necessarily positive) spectral radius of $U(T;0)$ is an eigenvalue (not necessarily simple), to which there exists a nonnegative eigenvector.

One can in a natural way define the {\em top Lyapunov exponent\/} $\Lambda$ as the logarithmic growth rate of the norm of $U(t; 0)$:
\begin{equation*}
\Lambda = \lim\limits_{t \to \infty} \frac{\ln{\norm{U(t; 0)}}}{t}.
\end{equation*}
In~particular, some estimates in~\cite{Mi}, for~example estimates as in Theorem~\ref{thm:Kolotilina-Frobenius}, hold, after suitable modifications, for cooperative systems.

\section{Strongly Cooperative Systems of \\ Parabolic PDEs}
\label{sect:PDE-systems}
Consider a system of linear reaction--diffusion equations
\begin{equation}
\label{eq:PDE-system}
\begin{cases}
\displaystyle \frac{\partial u_{i}}{\partial t} = {\Delta} u_{i} +
\sum\limits_{j=1}^{n} c_{ij}(t,x) u_j, & \quad 1 \le i \le N, \ x \in \Omega,
\\[1ex]
u_i(t,x) = 0, & \quad 1 \le i \le N, \ x \in \p D,
\end{cases}
\end{equation}
where the $C^3$ functions $c_{ij} \colon \RR \times \overline{D} \to \RR$ are assumed to be time\nobreakdash-\hspace{0pt}periodic with period $T$.

System~\eqref{eq:PDE-system} is called {\em strongly cooperative\/} if for each $t \in \RR$ and $x \in \overline{D}$ the matrix $[c_{ij}(t,x)]_{i,j=1}^{N}$ is in $\mathcal{M}_N$.

\smallskip
For strongly cooperative systems~\eqref{eq:PDE-system} an analog of Theorem~\ref{thm:PDE-Floquet} holds, where by a {\em globally positive solution\/} of~\eqref{eq:PDE-system} one understands a solution  $v \colon \RR \times \overline{D} \to \RR^N$ such that $v_i(t, x) > 0$ for all $1 \le i \le N$, $t \in \RR$ and $x \in D$.

\medskip
A theory of the principal spectrum for autonomous (and the principal Lyapunov exponent for random) strongly cooperative systems of linear reaction--diffusion equations was presented in~\cite{MiSh}.  In~particular, for systems~\eqref{eq:PDE-system} such that $c_{ij}$ does not, for $i \ne j$, depend on time an analog of Theorem~\ref{thm:PDE-averaging} was proved (see~\cite[Subsection 6.3.3]{MiSh}).

\section{More General Linear Parabolic PDEs of Second Order}
\label{sect:PDEs-general}

\subsection{General Linear Parabolic PDEs}
\label{subsect:PDE-general}
Consider a Dirichlet problem for a linear parabolic partial differential equation of second order
\begin{equation}
\label{eq:PDE-most-general}
\begin{cases}
\displaystyle
\frac{\p u}{\p t} = \displaystyle \sum_{i,j=1}^{n} a_{ij}(t,x) \frac{\p^2 u}{\p x_{i} x_{j}} + \sum_{i=1}^n b_i(t,x) \frac{\p u}{\p x_i} + c(t,x)u, & \quad  x \in D,
\\[1.5ex]
u(t, x)  = 0, & \quad x \in \p D
\end{cases}
\end{equation}
on a bounded domain $D \subset \RR^n$ with sufficiently regular boundary $\p D$, where the (suitably regular) coefficients $a_{ij}$, $a_i$, $b_i$ and $c$ are time\nobreakdash-\hspace{0pt}periodic with period $T$, $a_{ij} \equiv a_{ji}$ for any $i \ne j$, and there is $\gamma > 0$ such that
\begin{equation*}
\sum\limits_{i,j=1}^{n} a_{ij}(t, x) \xi_{i} \xi_{j} \ge \gamma \vert\vert \xi \vert\vert_{\RR^n}^{2}, \quad \forall \, t \in \RR, \ \forall \, x \in D, \ \forall \, \xi = (\xi_1, \dots, \xi_n) \in \RR^n.
\end{equation*}
For such systems, it was proved in~\cite{Hess} that there are a number $\rho > 0$ and a globally positive solution $v(t,x)$ of~\eqref{eq:PDE-most-general} such that all the properties as in Theorem~\ref{thm:PDE-Floquet} are satisfied.

When the second- and first\nobreakdash-\hspace{0pt}order coefficients $a_{ij}$ and $b_i$ are independent of time, an analog of the ``trivial'' estimates in Proposition~\ref{prop:PDE-trivial} holds for problems~\eqref{eq:PDE-most-general}:
\begin{equation*}
\lambda_{\mathrm{princ}}^{\mathrm{min}} \le \Lambda \le \lambda_{\mathrm{princ}}^{\mathrm{max}},
\end{equation*}
where $\lambda_{\mathrm{princ}}^{\mathrm{min}}$ stands for the principal eigenvalue of the elliptic boundary value problem
\begin{equation}
\label{eq:PDE-most-general-elliptic}
\begin{cases}
\displaystyle \sum_{i,j=1}^{n} a_{ij}(x) \frac{\p^2 u}{\p x_{i} x_{j}} + \sum_{i=1}^n b_i(x) \frac{\p u}{\p x_i} + \Bigl( \min\limits_{t \in [0,T]} c(t,x) \Bigr) u = 0, & \quad  x \in D,
\\[1.5ex]
u(x)  = 0, & \quad x \in \p D
\end{cases}
\end{equation}
(and similarly for $\lambda_{\mathrm{princ}}^{\mathrm{max}}$).

\medskip
The Dirichlet boundary condition in~\eqref{eq:PDE-most-general} can be replaced by the Neumann boundary condition
\begin{equation*}
\sum\limits_{i=1}^{n} \frac{\p u}{\p x_{i}} \nu_i(t,x) = 0, \quad x \in \p D,
\end{equation*}
or by the Robin boundary conditions
\begin{equation*}
\sum\limits_{i=1}^{n} \frac{\p u}{\p x_{i}} \nu_i(t,x) + d(t,x) u = 0, \quad x \in \p D,
\end{equation*}
where $\nu = (\nu_1, \dots, \nu_n) \colon \RR \times \p D \to \RR^n$ is a $T$\nobreakdash-\hspace{0pt}periodic (in $t$) sufficiently regular vector field on $\p D$ pointing out of $D$ and $d \colon \RR \times \p D \to [0,\infty)$ is $T$\nobreakdash-\hspace{0pt}periodic (in $t$) sufficiently regular function.  Indeed, other boundary conditions are allowed as long as they admit the strong maximum principle for the corresponding problem.

\subsection{Linear Parabolic PDEs in Divergence Form}
\label{subsect:PDE-divergence-form}
Let us specialize to Dirichlet problems for linear parabolic PDEs of second order in {\em divergence form\/}:
\begin{equation}
\label{PDE}
\left\{
\begin{aligned}
\displaystyle
& \frac{\p u}{\p t} = \displaystyle \sum_{i=1}^{n} \frac{\p}{\p x_i}  \biggl( \sum_{j=1}^{n} a_{ij}(t,x) \frac{\p u}{\p x_{j}} + a_{i}(t,x) u \biggr)
\\
& \quad \quad + \sum_{i=1}^n b_i(t,x)\frac{\p u}{\p x_i} + c(t,x)u, & \quad  x \in D,
\\[1ex]
& u(t, x) = 0, & \quad  x \in \p D.
\end{aligned}
\right.
\end{equation}
After performing some integration by parts a reader who is well versed in the existence theory of parabolic PDEs will notice that, for~\eqref{PDE},
\begin{equation*}
\Lambda = - \frac{1}{T} \int\limits_{0}^{T} B(t; w(t), w(t)) \, dt,
\end{equation*}
where $B(t; \cdot, \cdot)$ denotes the {\em Dirichlet form\/}:
\begin{equation}
\label{def:Dirichlet-form}
\begin{aligned}
B(t; u, v) & := \int\limits_{D} \biggl( \sum_{i=1}^{n} \Bigl(
\sum_{j=1}^{n} a_{ij}(t,x) \frac{\p u}{\p x_{j}} + a_{i}(t,x) \Bigr)
\frac{\p v}{\p x_{i}} \biggr) \, dx \\
& - \int\limits_{D} \Bigl( \sum_{i=1}^{n} b_{i}(t,x) \frac{\p u}{\p
x_{i}} + c(t,x) \Bigr) v \, dx, \quad u, v \in H_{0}^{1}(D).
\end{aligned}
\end{equation}
If $a_i$ and $b_i$ are constantly equal to zero, that is, in the case of
\begin{equation}
\label{eq:PDE-self-adj}
\begin{cases}
\displaystyle
\frac{\p u}{\p t} = \displaystyle \sum_{i=1}^{n} \frac{\p}{\p x_i}  \biggl( \sum_{j=1}^{n} a_{ij}(t,x) \frac{\p u}{\p x_{j}} \biggr) + c(t,x)u, & x \in D,
\\[1ex]
u(t, x)  = 0, & x \in \p D,
\end{cases}
\end{equation}
the corresponding differential operator is, for all $t$, self\nobreakdash-\hspace{0pt}adjoint, and the Dirichlet form is symmetric:
\begin{equation*}
B(t; u, v) = \int\limits_{D} \biggl( \sum_{i,j=1}^{n} a_{ij}(t,x) \frac{\p u}{\p x_{j}} \frac{\p v}{\p x_{i}} + c(t,x) u v \biggr) \, dx, \quad u, v \in H_{0}^{1}(D).
\end{equation*}
Then the analog of Proposition~\ref{prop:PDE-above} holds:
\begin{equation*}
\Lambda \le \frac{1}{T} \int\limits_{0}^{T} \lambda_{\mathrm{princ}}(A(t) + c(t, \cdot) I) \, dt,
\end{equation*}
where $\lambda_{\mathrm{princ}}(A(t) + c(t, \cdot) I)$ denotes the principal eigenvalue of the elliptic boundary value problem
\begin{equation}
\label{eq:PDE-self-adj-elliptic}
\begin{cases}
\displaystyle
\displaystyle \sum_{i=1}^{n} \frac{\p}{\p x_i}  \biggl( \sum_{j=1}^{n} a_{ij}(t,x) \frac{\p u}{\p x_{j}} \biggr) + c(t,x)u = 0, & x \in D,
\\[1ex]
u(x)  = 0, & x \in \p D.
\end{cases}
\end{equation}

\medskip
The Dirichlet boundary condition in~\eqref{PDE} can be replaced by the Neumann boundary condition
\begin{equation*}
\sum_{i=1}^N \Bigl( \sum_{j=1}^N a_{ij}(t, x) \frac{\p u}{\p  x_j} + a_{i}(t, x) u \Bigr) \nu_i = 0, \quad x \in \p D,
\end{equation*}
or the Robin boundary conditions
\begin{equation*}
\sum_{i=1}^N \Bigl( \sum_{j=1}^N a_{ij}(t, x) \frac{\p u}{\p x_j} + a_{i}(t, x) u \Bigr) \nu_i + d(t,x) u = 0, \quad x \in \p D,
\end{equation*}
where now $\nu = (\nu_1, \dots, \nu_n)$ denotes the unit normal vector field on $\p D$
pointing out~of $D$ and $d \colon \RR \times \p D \to [0,\infty)$ is $T$\nobreakdash-\hspace{0pt}periodic (in $t$) sufficiently regular function.

\subsection{Faber--Krahn Inequalities}
\label{subsect:Faber-Krahn}
In the case of
\begin{equation}
\label{eq:PDE-Faber-Krahn}
\begin{cases}
\displaystyle
\frac{\p u}{\p t} = \sum_{i=1}^{n} \frac{\p}{\p x_i}  \biggl( \sum_{j=1}^{n} a_{ij}(t,x) \frac{\p u}{\p x_{j}} \biggr), & \quad x \in D,
\\[1ex]
u(t, x)  = 0, & \quad x \in \p D,
\end{cases}
\end{equation}
let $\gamma \colon \RR \to (0,\infty)$ be a continuous $T$\nobreakdash-\hspace{0pt}periodic function such that for each $t \in \RR$,
\begin{equation*}
\sum\limits_{i,j=1}^{n} a_{ij}(t,x) \xi_{i} \xi_{j} \ge \gamma(t) \vert\vert \xi \vert\vert_{\RR^n}^2, \quad \forall \, x \in D, \ \forall \, \xi = (\xi_1, \dots, \xi_n) \in \RR^n.
\end{equation*}
Then
\begin{equation*}
\Lambda \le \Bigl( \frac{1}{T} \int\limits_{0}^{T} \gamma(t) \, dt \Bigr) \lambda_{\mathrm{princ}}^{\mathrm{sym}},
\end{equation*}
where $\lambda_{\mathrm{princ}}^{\mathrm{sym}}$ denotes the principal eigenvalue of the Dirichlet Laplacian on the ball in $\RR^n$ whose volume equals the volume of $D$ (for a proof, see \cite{Hess}; see also extensions to nonautonomous and random cases in~\cite{MiSh0} and~\cite[Section 5.4]{MiSh}).

\subsection{Spatial Averaging}
\label{subsect:PDE-spatial-averaging}
For time\nobreakdash-\hspace{0pt}periodic linear parabolic PDEs with Neumann boundary conditions
\begin{equation}
\label{eq:PDE-spatial-averaging}
\begin{cases}
\displaystyle
\frac{\p u}{\p t} = \sum_{i=1}^{n} \frac{\p}{\p x_i}  \biggl( \sum_{j=1}^{n} a_{ij}(t) \frac{\p u}{\p x_{j}} \biggr) + c(t,x) u, & \quad x \in D,
\\[1ex]
\displaystyle \sum_{i=1}^N \Bigl( \sum_{j=1}^N a_{ij}(t) \frac{\p u}{\p x_j}\Bigr) \nu_i = 0, & \quad x \in \p D,
\end{cases}
\end{equation}
where $\nu = (\nu_1, \dots, \nu_n)$ denotes the unit normal vector field on $\p D$
pointing out~of $D$, the principal Lyapunov exponent of~\eqref{eq:PDE-spatial-averaging} is bounded \textbf{below} by the principal Lyapunov exponent of the problem
\begin{equation}
\label{eq:PDE-space-averaged}
\begin{cases}
\displaystyle
\frac{\p u}{\p t} = \sum_{i=1}^{n} \frac{\p}{\p x_i}  \biggl( \sum_{j=1}^{n} a_{ij}(t) \frac{\p u}{\p x_{j}} \biggr) + \widetilde{c}(t) u, & \quad x \in D,
\\[1ex]
\displaystyle \sum_{i=1}^N \Bigl( \sum_{j=1}^N a_{ij}(t) \frac{\p u}{\p x_j}\Bigr) \nu_i = 0, & \quad x \in \p D,
\end{cases}
\end{equation}
where $\widetilde{c}$ is the {\em space average\/} of $c$:
\begin{equation*}
\widetilde{c}(t) := \frac{1}{\abs{D}} \int\limits_{D} c(t,x) \, dx, \qquad t \in [0, T]
\end{equation*}
($\abs{D}$ denotes the volume of $D$).  See \cite[Section 5.3]{MiSh}.  Also, when the coefficients are sufficiently smooth it can be proved that those principal Lyapunov exponents are equal if~and~only~if $c$ is independent of $x$.

\section{Nonlocal Dispersal Equations}
\label{sect:dispersal}
Another generalization of~\eqref{eq:PDE-parabolic} rests on replacing the Laplace operator $\Delta$ (corresponding to the infinitesimal dispersal) with some nonlocal dispersal operator.  To be more specific, consider
\begin{equation}
\label{eq:dispersal}
\frac{\partial u}{\partial t} = \int\limits_{D} K(x - y) \bigl( u(t,y) - u(t,x) \bigr) \, dx + c(t,x) u, \quad x \in D,
\end{equation}
where $K \colon \RR^n \to \RR$ is a suitable kernel and $c \colon \RR \times \overline{D} \to \RR$ is a sufficiently regular function, time\nobreakdash-\hspace{0pt}periodic in $t$ with period $T$.

In the case of nonautonomous nonlocal dispersal equations various concepts related to the principal Lyapunov exponent were investigated in, e.g., \cite{ShV}, \cite{HShV2}, \cite{RSh}, \cite{ShXie}.

It is worth mentioning that frequently the top Lyapunov exponent introduced in the above papers has the property that it is bounded below by the principal eigenvalue of the time\nobreakdash-\hspace{0pt}averaged problem.

\section{(Strongly) Cooperative Delay Differential Equations}
\label{sect:delay}
Consider a linear system of delay differential equations
\begin{equation}
\label{eq:delay}
\frac{d u}{d t}(t) = C(t) u(t) + D(t) u(t-T),
\end{equation}
where $C, D \colon \RR \to \RR^{N \times N}$ are continuous and
$T$\nobreakdash-\hspace{0pt}periodic matrix functions.

Let $\vert\vert \cdot \vert\vert_{\RR^N}$ denote the Euclidean norm on $\RR^N$.

$C([-T,0], \RR^N)$ stands for the Banach space of $\RR^N$\nobreakdash-\hspace{0pt}valued continuous functions defined on $[-T,0]$, with the supremum norm
\begin{equation*}
\norm{u} = \sup\{\, \vert\vert u(\tau) \vert\vert_{\RR^N}: \tau \in [-T, 0] \, \}.
\end{equation*}

It can be proved in the framework of the theory of nonhomogeneous linear systems of ODEs (with no delay) that for any initial time $s \in \RR$ and any initial value $u_0 \in C([-T,0], \RR^N)$
there exists a unique solution $u(\cdot; s, u_0)$ of the initial\nobreakdash-\hspace{0pt}value problem
\begin{equation*}
\begin{cases}
\displaystyle \frac{d u}{d t}(t) = C(t) u(t) + D(t) u(t-T), & t \ge s
\\[1.5ex]
u(t; s, u_0) = u_0(t), & s - T \le t \le s.
\end{cases}
\end{equation*}
Moreover, a little amount of functional analysis allows us to conclude that, if we define a linear operator $U(t; s)$, $s \le t$, from $C([-T, 0], \RR^N)$ into itself by the formula
\begin{equation*}
(U(t;s)u_0)(\tau) := u(t + \tau;s, u_0), \quad \tau \in [-T, 0],
\end{equation*}
the family $\{ U(t; s) \}_{s \le t}$ satisfies
\begin{equation*}
U(t; r) = U(t; s) U(s; r) \quad \text{for any } r \le s \le t,
\end{equation*}
and
\begin{equation*}
U(t + T; s + T) = U(t; s) \quad \text{for any } s \le t.
\end{equation*}
Moreover, the linear operator $U(t; s)$ is completely continuous for $t \ge s + T$.

When we assume that, for any $t \in \RR$, $C(t)$ is in $\mathcal{M}_N$ and $D(t)$ has nonnegative entries (such systems can be called {\em strongly cooperative\/}), we can apply the Kre\u{\i}n--Rutman theorem to $U(T; 0)$ to conclude that there is a unique $w = (w_1, \dots, w_N) \in C([-T,0], \RR^N)$ such that $\norm{w} = 1$, $w_i(\tau) > 0$ for all $1 \le i \le N$ and all $\tau \in [-T,0]$ and $U(T;0) w = {\rho} w$, where $\rho$ denotes the spectral radius of $U(T;0)$.

We can define the {\em principal Lyapunov exponent\/} of a strongly cooperative~\eqref{eq:delay} as
\begin{equation*}
\Lambda = \frac{\ln{\rho}}{T}.
\end{equation*}

Usually it is better to look at $w$ in a different way:  there is a solution $v = (v_1, \dots, v_N) \colon \RR \to \RR^{N}$ of~\eqref{eq:delay} such that for each $t \in \RR$ there holds $v_i(t) > 0$, $1 \le i \le N$, and $\sup\{\, \vert\vert v(\tau) \vert\vert_{\RR^N} : \tau \in [-T,0] \,\} = 1$.  After some effort one can prove that
\begin{equation*}
\lim\limits_{t \to \infty} \frac{\ln{\vert\vert v(t) \vert\vert_{\RR^N}}}{t} = \Lambda,
\end{equation*}
and, moreover, that for any nontrivial solution $u(\cdot) = u(\cdot; s, u_0)$, where $(u_0)_i(s - \tau) \ge 0$ for all $1 \le i \le N$ and all $\tau \in [-T, 0]$ there holds
\begin{equation*}
\lim\limits_{t \to \infty} \frac{\ln{\vert\vert u(t) \vert\vert_{\RR^N}}}{t} = \Lambda,
\end{equation*}

With the help of standard comparison principles for systems of ODEs one can prove analogs of ``trivial'' estimates on $\Lambda$ as in Proposition~\ref{prop:ODE-trivial}.

\medskip
In~\cite{COS}, based on a theory developed in~\cite{NoObS}, the existence of $\Lambda$ satisfying some of the above properties was proved under relaxed assumptions of cooperativity (and with quasi-periodic time dependence).  Furthermore, estimates of $v_{\omega}(0)$ together with a modification of formula~\ref{eq:ODE-Lambda-AP-2} were used to calculate an approximate value of $\Lambda$ (see \cite[Example 5.2]{COS}).

\part{Discrete-Time Counterparts}
\label{part:discrete-time}
As in Section~\ref{section:random}, let $(\Omega, \mathfrak{F}, \PP)$ be a measure space: $\Omega$ is a set, $\mathfrak{F}$ is a $\sigma$\nobreakdash-\hspace{0pt}algebra of subsets of $\Omega$ and $\PP$ is a probability measure on $\mathfrak{F}$.

We let $\theta$ be a $\PP$\nobreakdash-\hspace{0pt}preserving ergodic automorphism of the measure space $(\Omega, \mathfrak{F}, \PP)$.

$\norm{\cdot}$ denotes the Euclidean norm of a vector in $\RR^N$ or a matrix in $\RR^{N \times N}$, depending on the context.

\smallskip
Assume that $M \colon \Omega \to \RR^{N \times N}$ be a measurable mapping.  For $\omega \in \Omega$ and $n \in \NN$ we write
\begin{equation*}
M^{(n)}(\omega) := M(\theta^{n-1}\omega) \cdot \ldots \cdot
M(\omega).
\end{equation*}
Further, let $M^{(0)}(\omega) := I$.

\medskip
{\em Autonomous\/} systems of linear ODEs correspond to iterates, $M^n$, of a matrix $M$.  In that context, strong cooperativity for systems of linear ODEs corresponds to the positivity of $M$.

However, in the random discrete\nobreakdash-\hspace{0pt}time case one needs to impose, besides the nonnegativity of $M(\omega)$, some additional assumptions to guarantee that the theory of principal Lyapunov exponents carry over to that case.  Those assumptions,  given in the form of some functions, expressed in~terms of the entries of $M(\omega)$, belonging to $L_1\bigl(\OFP, \RR\bigr)$ (see \cite{ArnGundDem} and~\cite{MiSh2}), imply that there exist and are uniquely determined: a real number $\Lambda$ and a family $\{w(\omega)\}$ of positive unit vectors, indexed by $\omega$ belonging to some set $\Omega' \subset \Omega$ of full measure, depending measurably on $\omega \in \Omega'$, satisfying, for each $\omega \in \Omega'$, the following:
\begin{itemize}
\item
$M(\omega) w(\omega)$ equals $w(\theta \omega)$ multiplied by some positive number $\mu(\omega)$.
\item
\begin{equation*}
\Lambda = \lim\limits_{n \to \infty} \frac{\ln\norm{M^{(n)}(\omega) w(\omega)}}{n}.
\end{equation*}
\end{itemize}
Further, it is proved that for any $\omega \in \Omega'$,
\begin{equation*}
\lim\limits_{n \to \infty} \frac{\ln\norm{M^{(n)}}}{n} = \Lambda,
\end{equation*}
and that for any nonzero $u \in \RR^{N}_{+}$ and any $\omega \in \Omega'$,
\begin{equation*}
\lim\limits_{t \to \infty} \frac{\ln\norm{M^{(n)}(\omega) u}}{n} = \Lambda.
\end{equation*}
The number $\Lambda$ is called the {\em principal Lyapunov exponent\/} of the random system of positive matrices $\{M(\omega)\}_{\omega \in \Omega}$.

\medskip
It appears that results on estimates of principal Lyapunov exponent in the discrete\nobreakdash-\hspace{0pt}time case are much more numerous than in the continuous\nobreakdash-\hspace{0pt}time case.  For~example, in the case of some special systems explicit formulas for principal Lyapunov exponents are known (see~\cite{Roer} or~\cite{Tul}).

Of special interest is the paper~\cite{ProtJung}, in which homogeneous (of degree one) positive functionals on $\RR^{N}_{+}$ were used to obtain upper and lower estimates of principal Lyapunov exponents (compare the use of homogeneous functionals to prove lower estimates of principal Lyapunov exponents for strongly cooperative linear systems of ODEs in~\cite{Mi}, as described in Subsection~\ref{subsection:ODE-Lyapunov-function}).

In~\cite{BenSch} with the help of the homogeneous polynomial $V(u) = u_1 \cdot \ldots \cdot u_N$ the following lower estimate was obtained:
\begin{theorem}
\label{thm:Benaim-Schreiber}
\begin{equation*}
\Lambda \ge \frac{1}{N} \int\limits_{\Omega} \perm(M(\omega)) \,
d\PP(\omega),
\end{equation*}
where $\perm$ denotes the permanent of a matrix.
\end{theorem}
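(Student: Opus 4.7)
The plan is to use the homogeneous polynomial $V(u) = u_1 u_2 \cdots u_N$ as a Lyapunov functional on $\RR^N_{++}$, in direct analogy with the continuous-time argument behind Theorem~\ref{thm:Kolotilina-Frobenius}(i) (as recalled in Subsection~\ref{subsection:ODE-Lyapunov-function}).

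The first step is the pointwise multiplicative inequality $V(Mu) \ge \perm(M)\,V(u)$, valid for every nonnegative $N \times N$ matrix $M$ and every $u \in \RR^N_{++}$. It drops straight out of the multinomial expansion
$$V(Mu) = \prod_{i=1}^N \sum_{j=1}^N m_{ij} u_j = \sum_{\sigma \colon [N] \to [N]} \Bigl(\prod_{i=1}^N m_{i,\sigma(i)}\Bigr) \prod_{i=1}^N u_{\sigma(i)},$$
because the sub-sum over bijective $\sigma \in S_N$ already contributes exactly $\perm(M) \cdot u_1 \cdots u_N = \perm(M)\,V(u)$, while every non-injective $\sigma$ contributes a nonnegative term that may simply be discarded.

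The second step specialises the pointwise inequality to $M = M(\omega)$, $u = w(\omega)$, and iterates along the $\theta$-orbit. Combining $V(M(\omega)w(\omega)) \ge \perm(M(\omega)) V(w(\omega))$ with the defining cocycle relation $M(\omega) w(\omega) = \mu(\omega) w(\theta\omega)$ and the degree-$N$ homogeneity of $V$ gives
$$\mu^{(n)}(\omega)^N \, V(w(\theta^n\omega)) \; \geq \; \Bigl(\prod_{k=0}^{n-1} \perm(M(\theta^k\omega))\Bigr) V(w(\omega)), \qquad \omega \in \Omega', \ n \in \NN.$$

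The third step is an ergodic average. Divide by $n$, take logarithms, and let $n \to \infty$: on the left, $\tfrac{N}{n}\ln \mu^{(n)}(\omega) \to N\Lambda$ by the ergodic identification of $\Lambda$ given in the excerpt, while the coboundary $\tfrac{1}{n}\ln V(w(\theta^n\omega)) \to 0$ a.s.; on the right, Birkhoff's pointwise ergodic theorem applied to $\omega \mapsto \perm(M(\omega))$, together with the vanishing of $\tfrac{1}{n}\ln V(w(\omega))$, produces the integral $\int_\Omega \perm(M(\omega))\,d\PP(\omega)$. Dividing through by $N$ yields the stated lower bound.

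The main obstacle is controlling the coboundary factors $\tfrac{1}{n}\ln V(w(\theta^n\omega))$ to ensure they vanish a.s., which reduces to the $\PP$-integrability of $\omega \mapsto \ln w_i(\omega)$ for each coordinate $i$. The positive part is free since $\norm{w(\omega)}=1$ forces $w_i(\omega) \le 1$; the negative part requires a lower-tail estimate on $\min_i w_i(\omega)$, and this is supplied by the standing $L_1$-integrability conditions on the entries of $M(\omega)$ that are already imposed in the framework of~\cite{MiSh2} (and present in~\cite{BenSch}). Once this technical point is secured, the rest is a routine application of Birkhoff's theorem and exactly mirrors the continuous-time argument in Subsection~\ref{subsection:ODE-Lyapunov-function}.
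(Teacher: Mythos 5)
Your route --- the polynomial $V(u)=u_1\cdots u_N$, the pointwise inequality $V(Mu)\ge\perm(M)\,V(u)$, the cocycle relation $M(\omega)w(\omega)=\mu(\omega)w(\theta\omega)$, and a Birkhoff average --- is exactly the one the survey attributes to \cite{BenSch} (the paper itself gives no proof beyond this hint), and your first two steps are correct. The gap is in the last line of your third step: once you have taken logarithms, the ergodic average appearing on the right-hand side is that of $\omega\mapsto\ln\perm(M(\omega))$, so Birkhoff's theorem gives the limit $\int_\Omega \ln\perm(M(\omega))\,d\PP(\omega)$, not $\int_\Omega \perm(M(\omega))\,d\PP(\omega)$. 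What your computation actually proves is $\Lambda\ge\frac{1}{N}\int_\Omega \ln\perm(M(\omega))\,d\PP(\omega)$, which is the genuine result of \cite{BenSch}; the display in the survey is evidently missing the logarithm, and as printed it cannot hold in general (replace $M(\omega)$ by $cM(\omega)$: the left-hand side increases by $\ln c$ while the right-hand side is multiplied by $c^N$, so the inequality fails for large $c$). So either you flag the typo and prove the logarithmic version, or the asserted convergence in your step three is a non sequitur with the preceding line.

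A second, smaller point: the ``main obstacle'' you identify is not an obstacle. In the inequality $N\ln\mu^{(n)}(\omega)+\ln V\bigl(w(\theta^n\omega)\bigr)\ge\sum_{k=0}^{n-1}\ln\perm\bigl(M(\theta^k\omega)\bigr)+\ln V\bigl(w(\omega)\bigr)$ the awkward term $\ln V\bigl(w(\theta^n\omega)\bigr)$ sits on the favorable side: since $\norm{w(\theta^n\omega)}=1$ forces every coordinate to be at most $1$, this term is $\le 0$ and may simply be discarded, while $\frac{1}{n}\ln V\bigl(w(\omega)\bigr)\to 0$ because it is a fixed finite number ($w(\omega)\in\RR^N_{++}$ for the chosen $\omega$). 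Hence no lower-tail estimate on $\min_i w_i(\omega)$ and no integrability of $\ln w_i(\cdot)$ is needed; the only summability required is that of $\ln^{+}\perm(M(\cdot))$ (which follows from the standing $L_1$ hypotheses on the entries) so that Birkhoff's theorem applies, the bound being vacuous anyway if the negative part fails to be integrable.
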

\begin{problem}
\label{pr:2}
{\em \textbf{Develop a unified approach, via homogeneous functionals, to lower estimates of the principal Lyapunov exponent which could cover both cooperative systems of linear ODEs and random systems of positive matrices.}}
\end{problem}

\subsection{State-Averaging}
\label{subsect:state-averaging}
One could ask if for (some) random systems of positive matrices there is an analog of Theorem~\ref{thm:averaging-ODEs}.

\smallskip
The answer is yes.  Indeed, assume that $M(\omega) = A B(\omega)$, where $A$ is a positive matrix and $B(\omega) = \diag(b_1(\omega), \dots, b_N(\omega))$ with $b_i(\omega) > 0$, $1 \le i \le N$, $\omega \in \Omega$ (this is a natural assumption from the point of view of biological applications, see~\cite{Schreib}).  Further, we make some technical assumptions concerning the summability of some functions expressed in terms of logarithms, maximums and minimums of $b_i(\cdot)$ (those assumptions guarantee that it will be possible to apply Birkhoff's ergodic theorem).

For $1 \le i \le N$ put
\begin{equation*}
\ln{\overline{b}_i} = \int\limits_{\Omega} \ln{b_i(\omega)} \, d\PP(\omega).
\end{equation*}
We claim that the principal Lyapunov exponent $\Lambda$ of the original system is bounded below by the logarithm of the dominant eigenvalue of $A \overline{B}$, where $\overline{B} = \diag(\overline{b}_1, \dots, \overline{b}_N)$.

We write $w(\cdot) = (w_1(\cdot), \dots, w_N(\cdot))$.  Fix $\omega \in \Omega'$ such that
\begin{equation*}
\lim\limits_{n \to \infty} \frac{1}{n}\sum\limits_{k=0}^{n-1}
\ln{w_i(\theta^{k} \omega)} = \int \ln{w_i(\cdot)} \, d\PP(\cdot)
\end{equation*}
for all $1 \le i \le N$ (from the assumptions alluded~to above it follows that $\ln{w_i(\cdot)}$ are in $L_1\bigl(\OFP, \RR\bigr)$ so, due to Birkhoff's ergodic theorem, the above equality holds for a.e.\ $\omega \in \Omega'$).  Put $w(n) :=
w(\theta^{n}\omega)$, $\mu(n) := \mu(\theta^{n}\omega)$, $b_i(n) := b_i(\theta^{n}\omega)$, $n = 0, 1, 2, \dots$ (recall that $w(n + 1) = \mu(n) w(n)$).

We have
\begin{equation*}
w_{i}(n + 1) = \frac{1}{\mu(n)} \sum\limits_{j=1}^{N} a_{ij} b_{j}(n) w_j(n).
\end{equation*}
Put
\begin{equation*}
\widetilde{w}_{i}(n) := \exp\Bigl( \frac{1}{n} \sum\limits_{k=0}^{n-1}
\ln{w_i(k)}\Bigr), \quad \widehat{w}_{i}(n) := \exp\Bigl( \frac{1}{n}
\sum\limits_{k=1}^{n} \ln{w_i(k)}\Bigr).
\end{equation*}
Further, let
\begin{equation*}
\widetilde{b}_{i}(n) := \exp\Bigl( \frac{1}{n} \sum\limits_{k=0}^{n-1}
\ln{b_i(k)}\Bigr), \quad \widetilde{\mu}(n) := \exp\Bigl( \frac{1}{n}
\sum\limits_{k=0}^{n-1} \ln{\mu(k)}\Bigr).
\end{equation*}
For any $i, j$ an application of the geometric\nobreakdash-\hspace{0pt}arithmetic mean
inequality gives that
\begin{equation*}
\exp\biggl( \frac{1}{n} \sum\limits_{k=0}^{n-1} \ln\Bigl(
\frac{1}{\mu(k)} b_j(k) \frac{w_{j}(k)}{w_{i}(k+1)} \Bigr) \biggl)
\le \frac{1}{n} \sum\limits_{k=0}^{n-1} \frac{1}{\mu(k)} b_j(k)
\frac{w_{j}(k)}{w_{i}(k+1)}.
\end{equation*}
For each $i$, by multiplying the above inequality by $a_{ij}$, $1 \le j \le N$, and adding the resulting inequalities one obtains, after some calculation, that
\begin{multline*}
\frac{1}{\exp\biggl( -\frac{1}{n} \sum\limits_{k=0}^{n-1} \ln{\mu(k)}
\biggr)} \sum\limits_{j=1}^{N} a_{ij} \exp\biggl( \frac{1}{n}
\sum\limits_{k=0}^{n-1} \ln{b_j(k)} \biggr) \frac{\widetilde{w}_j(n)}{\widehat{w}_i(n)}
\\
\le \frac{1}{n} \sum\limits_{k=0}^{n-1} \frac{1}{\mu(k)} b_j(k) \frac{\sum\limits_{j=1}^{N} a_{ij} w_j(k)}{w_i(k)} = 1,
\end{multline*}
that is,
\begin{equation*}
\sum\limits_{j=1}^{N} a_{ij} \widetilde{b}_j(n) \widetilde{w}_j(n) \le \widetilde{\mu}(n) \widehat{w}_i(n).
\end{equation*}
It is a consequence of standard estimates in ergodic theory that $\tilde{\mu}(n)$ are
bounded away from zero, uniformly in $n$.

Take a sequence $(n_m)_{m=1}^{\infty} \subset \NN$ such that $\lim\limits_{m \to \infty} n_m = \infty$ and $\lim\limits_{m\to\infty} \widetilde{w}_i(n_m) =: \overline{w}_i$ exist for all $1 \le i \le N$.  Since
\begin{equation*}
\hat{w}_i(n) = \widetilde{w}_j(n) \exp\biggl( \frac{1}{n}
\ln{\frac{w_i(n)}{w_i(0)}} \biggr),
\end{equation*}
it follows that $\lim\limits_{m\to\infty} \widehat{w}_i(n_m) =\overline{w}_i$ for all $1 \le i \le N$.

As $\lim\limits_{n \to \infty} \widetilde{\mu}(n) = e^{\Lambda}$, we have therefore found $\overline{w} \in \RR^{N}_{++}$ such that
\begin{equation*}
A \overline{B} \overline{w} \le e^{\Lambda} \overline{w}
\end{equation*}
holds.  By~\cite[Theorem (1.11) on p.~28]{BP}, the logarithm of the spectral radius of $A\overline{B}$ does not exceed the principal Lyapunov exponent $\Lambda$.

\part{Concluding Remarks}
\label{part:concluding}
\begin{remark}
{\em In the present survey a common feature is that it has been done from the point of view of \textbf{individual trajectories}: there is some distinguished solution (denoted by $v(t)$ or the like) the logarithmic growth rate of the norm of which serves as the principal Lyapunov exponent.  We can call this approach \textbf{dynamical}.

A variant of the dynamical approach rests on representing, via Birkhoff's ergodic theorem, the logarithmic growth rate of the distinguished solution with the integral over a measure space $\Omega$ of some computable quantity, as in formula~\ref{eq:ODE-Lambda-AP-2}.

\medskip
There can be a different approach, more \textbf{functional\nobreakdash-\hspace{0pt}analytic} in its spirit.  Namely, the original system gives rise to an {\em evolution semigroup\/} of bounded linear operators on the space $L_p(\RR, X)$, or $C_{0}(\RR, X)$, or others (see~\cite{ChicLat}).  Evolution semigroups generated by systems considered in the present survey possess positivity (order\nobreakdash-\hspace{0pt}preserving) properties, which can allow one to apply the spectral theory of such semigroups (see, e.g., \cite{BanArl}).  That approach is especially fruitful for time\nobreakdash-\hspace{0pt}periodic systems, see~\cite{Hess}, or~\cite{HShV2}.}
\end{remark}

\begin{remark}
{\em It is a recurring theme in the present survey that, for systems in which interactions between ''components'' (be it indices $i \in \{1, \dots, N\}$ or points $x \in D$) are independent of time, the principal Lyapunov exponent of the original system is not smaller than the principal eigenvalue of the time\nobreakdash-\hspace{0pt}averaged (in the periodic or almost periodic cases) or the state\nobreakdash-\hspace{0pt}averaged (in the random case) system (sometimes one can even prove that the inequality is sharp except some trivial cases).  As, in applications to biological populations, the positivity of the principal Lyapunov exponent ensures the permanence of the population, we have that \textbf{temporal variation in the growth rates enhances the capability of the population to persist}.  This is in marked contrast to what is known in population biology, see~\cite[Introduction]{Schreib}.}
\end{remark}
\begin{problem}
{\em \textbf{Develop a theory which would unify all the mentioned results, in the continuous\nobreakdash-\hspace{0pt}time as well as in the discrete\nobreakdash-\hspace{0pt}time cases, on estimating the principal Lyapunov exponent via averaging.}  (For a unified treatment of some PDEs, nonlocal dispersal equations and some systems of ODEs see~\cite{ShV}).}
\end{problem}

\section*{Acknowledgments}
This work was supported by the NCN grant Maestro 2013/08/A/ST1/00275.

The author is grateful to Rafael Obaya and Ana M. Sanz for sending him a preprint of~\cite{COS}, and to Sylvia Novo, Rafael Obaya and Ana M. Sanz for valuable discussions.

\end{document}